\DeclareMathAlphabet{\mathcalligra}{T1}{calligra}{m}{n}
\theoremstyle{plain}
\newtheorem{thm}{Theorem}
\newtheorem*{thm*}{Theorem}
\newtheorem{lem}[thm]{Lemma}
\newtheorem{prop}[thm]{Proposition}
\newtheorem{conj}[thm]{Conjecture}
\newtheorem{cor}[thm]{Corollary}
\newtheorem{df-prop}[thm]{Definition-Proposition}
\theoremstyle{definition}
\theoremstyle{remark}
\newtheorem{rem}[thm]{Remark}
\newcommand{\Hom}{\operatorname{Hom}}
\newcommand{\mc}{\mathcal}
\newcommand{\mf}{\mathfrak}
\newcommand{\C}{\mathbb C}
\newcommand{\oa}{{\bar 0}}
\newcommand{\ob}{{\bar 1}}
\newcommand{\vare}{\epsilon} 
\def\gl{\mathfrak{gl}}
\newcommand{\g}{\mathfrak{g}}
\def\Ann{{\text{Ann}}}
\def\la{\lambda}
\def\pn{\mf{pe} (n)}
\newcommand{\h}{\mathfrak{h}}
\newcommand{\Z}{{\mathbb Z}}
\def\mod{\operatorname{-mod}\nolimits}
\def\Hom{\operatorname{Hom}\nolimits}
\def\Res{\operatorname{Res}\nolimits}
\def\Ind{\operatorname{Ind}\nolimits}
\def\gl{\mathfrak{gl}}
\def\la{\lambda}
\def\pn{\mf{pe} (n)}
\newcommand{\ad}{\mathrm{ad}}
\newcommand{\fp}{\mathfrak{p}}
\newcommand{\fu}{\mathfrak{u}}
\newcommand{\Real}{\mathrm{Re}}
\newcommand{\Coind}{{\rm Coind}}
\newcommand{\coker}{{\rm coker}}
\newcommand{\op}{{\text{op}}}
\newcommand{\DGD}{{\bf D}{\bf G}^2{\bf D}}
\begin{document}
\title[Serre functors for Lie superalgebras]{Serre functors for Lie superalgebras \\ and 	tensoring with $S^{\mathrm{top}}(\mathfrak{g}_{\overline{1}})$}

\author[Chen]{Chih-Whi Chen} \address{C.-W. C.: Department of Mathematics, National Central University, Chung-Li, Taiwan 32054, and  National Center of Theoretical Sciences, 	Taipei, Taiwan 10617} \email{cwchen@math.ncu.edu.tw}

\author[Mazorchuk]{Volodymyr Mazorchuk}
\address{V. M.: Department of Mathematics, Uppsala University, Box. 480,
	SE-75106, Uppsala \\ \mbox{SWEDEN}} \email{mazor\symbol{64}math.uu.se} 
\date{}

\begin{abstract}	 	We show that the action of the Serre functor on the subcategory of projective-injective modules in a parabolic BGG
		category $\mathcal O$ of a quasi-reductive finite dimensional Lie superalgebra is given by tensoring with the top component of the symmetric power of the odd part of our superalgebra. As an application, we determine, for all strange Lie suepralgebras, when the subcategory of projective injective modules in the parabolic category $\mathcal O$ is symmetric.
 \end{abstract}

\maketitle




\section{Introduction} \label{sect::intro}
\subsection{Motivation}  \label{sect::intro::1}
Let $\mathscr C$ be a $\mathbb{C}$-linear additive category  with finite dimensional morphism spaces. A {\em  Serre functor}  on $\mathscr{C}$, as defined in \cite{BoKa}, is an additive equivalence  $\mathbb{S}$ of $\mathscr{C}$ together with isomorphisms $$\Hom_{\mathscr C}(X, \mathbb{S}Y)\cong \Hom_{\mathscr C}(Y, X)^\ast,$$ for all $X, Y\in \mathscr C$, natural in $X$ and $Y$. If exists, a Serre functor is unique (up to isomorphism) and commutes with all auto-equivalences of $\mathscr{C}$. For example, let $ \mathscr C= \mc D^{b}(A)$ be the bounded derived category of complexes of modules over a finite dimensional associative algebra $A$ of finite global dimension. Then $\mc D^{b}(A)$  admits a Serre functor given by the left derived functor of the {\em Nakayama functor} $A^\ast\otimes{}_-: A\mod \rightarrow A\mod$, where $A^\ast$ is the dual of the regular bimodule, as introduced in \cite{Ha88}. An important infinite-dimensional setup in which Serre functors exist is that of the so-called {\em strongly locally finite categories}, as defined in \cite{MM},
	for which all injective modules have finite projective dimension.   In this case,  the left derived functor  of the  Nakayama functor $\mathbf{N}:= \mc C^\ast\otimes_{\mc C}{}_-:\mc C\mod\rightarrow\mc C\mod$ gives rise to a Serre functor on the full subcategory $\mathscr P(\mc C)$ of  the derived category $\mc D^-(\mc C)$ consisting of all finite complexes of projective objects (the so-called {\em perfect complexes}), see \cite[Subsection 2.3]{MM}.

 This paper is motivated by the ideas and the results of the paper \cite{MM}, which studies the Serre functors for the parabolic category $\mc O^{\mf p}$ associated to semisimple Lie algebras and classical Lie superalgebras $\g=\g_\oa\oplus \g_\ob$ of basic classical or queer type. In \cite{MM}, the authors develop a realization of the Serre functor $\mathbb S$ on $\mathscr{P}(\mc O^{\mf p})$ in terms of Harish-Chandra bimodules, under the assumption that the dimension $\dim \g_\ob$ of the odd part $\g_\ob$ of the Lie superalgebra $\g$ is even.  Under this assumption, it is shown  in \cite{MM} that the
 	endomorphism algebra of a basic additive generator of
 	the subcategory of  projective-injective modules in the
 	parabolic category $\mc O$ is symmetric.   The idea behind the proof is to investigate under what situation the corresponding Nakayama functor is isomorphic to the identity functor when restricted to   projective-injective objects;  see \cite[Proposition 2.3]{MM}.

  	 The present paper is an attempt to understand   the Serre functors for any quasi-reductive Lie superalgebras and the symmetric structure on the full subcategory of $\mc O^{\mf p}$ consisting of all projective-injective objects in full generality.   To be more precise, we give a general realization of the {\em Nakayama functor}  $\mathbf{N}$ on the parabolic category $\mc O^{\mf p}$ in terms of the Harish-Chandra $(\g, \g_\oa)$-bimodules, for any quasi-reductive Lie superalgebras $\g$. In particular, our results apply to all {\em strange} Lie superalgebras, including the {\em periplectic} Lie superalgebras $\pn$ and the {\em queer} \mbox{Lie superalgebras $\mf q(n)$.}

\subsection{Description of main results} \label{sect::intro::2} Let $\g$ be an arbitrary (not necessarily simple) quasi-reductive  finite dimensional Lie superalgebra with a fixed triangular decomposition $\g =\mf n^-\oplus \mf h\oplus \mf n^+$ and the Borel subalgebra $\mf b = \mf h \oplus \mf n^+$.  Associated to this triangular decomposition, we have the corresponding BGG category $\mc O$, see \cite{Ma14, CCC}. By \cite[Corollary 2.11]{CCC}, the top symmetric power $S^{\rm top}(\g_\ob)$ of $\g_\ob$,  which is, by definition, only a $\g_\oa$-module, is, in fact, the restriction of a $\g$-module. Furthermore, the  functor  $$S^{\rm top}(\g_\ob)\otimes({}_-): \mc O\rightarrow \mc O$$  defines an auto-equivalence of $\mc O$ with the quasi-inverse $E_{\g}\otimes({}_-):\mc O\rightarrow \mc O$, where 
$E_\g$ is the one-dimensional $\g$-module determined by the property that  $S^{\rm top}(\g_\ob)\otimes E_\g$ is isomorphic to the  trivial (even) $\g$-module. For example, if $\g$ is either a basic classical or a Q-type Lie superalgebra from Kac's list \cite{Ka1} (see also Subsection \ref{sect::21}), then $E_\g\otimes ({}_-)\cong \Pi^{\dim \g_\ob}({}_-)$, where $\Pi$ denotes the parity change functor.

 For a parabolic subalgebra $\mf p$ of $\g$ containing $\mf b$, we have the corresponding parabolic category $\mc O^{\mf p}$. Let $\mc O^{\mf p}_{\rm int}$ be the  full subcategory of $\mc O^{\mf p}$ consisting of all modules with integral weights.	Set $\mc{PI}^{\mf p}$ and $\mc{PI}^{\mf p}_{\rm int}$ to be the full subcategories of $\mc O^{\mf p}$ and $\mc O^{\mf p}_{\rm int}$ consisting of all projective-injective objects, respectively. The following is our first  main result:
 \begin{thm} \label{main::thm} Let $\g$ be a quasi-reductive Lie superalgebra with a parabolic subalgebra $\mf p$ of $\mf g$.   
Let $\mathbf{N}$ be the  Nakayama functor on $\mc O^{\mf p}$.   Then, we have  $$\mathbf{N}(P)\cong E_\g \otimes P,$$ for any projective-injective module $P\in \mc O^{\mf p}$.    In particular, the following are equivalent:
	\begin{itemize}
		\item[(a)]\label{conda} $\mc{PI}^{\mf p}_{\rm int}$ is symmetric.
		\item[(b)]\label{condb} 	The restriction of  the Serre functor $\mathbb S$ to  $\mathscr P(\mc{PI}_{\rm int}^{\mf p})$ is isomorphic to the identity.
		\item[(c)]\label{condc} 	The module $E_\g$ is isomorphic to the trivial module.
	\end{itemize} 
\end{thm}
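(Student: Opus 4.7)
The plan is to establish the Nakayama formula $\mathbf{N}(P)\cong E_\g\otimes P$ for projective-injective $P\in \mc O^{\mf p}$ first, and then derive the equivalence of (a)--(c) as essentially formal consequences. My strategy for the formula mirrors the Harish-Chandra bimodule approach used in \cite{MM}, but extended to realize $\mathbf{N}$ in terms of $(\g,\g_\oa)$-bimodules, which makes the appearance of the odd part $\g_\ob$ transparent.

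First, I would realize $\mathbf{N}$ on $\mc O^{\mf p}$ as a tensor functor $\mathbf{F}\otimes_{U(\g_\oa)}(-)$ for an appropriate Harish-Chandra $(\g,\g_\oa)$-bimodule $\mathbf{F}$. On the $\g_\oa$-side, the Nakayama functor of the parabolic category for the reductive Lie algebra $\g_\oa$ is isomorphic to the identity on projective-injectives by the classical result (the endomorphism algebra of a basic additive generator of the even analogue of $\mc{PI}_{\rm int}^{\mf p}$ is symmetric). The discrepancy between this classical Nakayama functor and the one for $\g$ is accounted for precisely by the super-Frobenius twist, which by \cite[Corollary~2.11]{CCC} is realized by the one-dimensional $\g$-module $E_\g$. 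Concretely, I would use that $S^{\rm top}(\g_\ob)\otimes (-)$ is an auto-equivalence of $\mc O$ with quasi-inverse $E_\g\otimes(-)$ to identify $\mathbf{F}$ so that its action on a projective-injective $P$, via the Mackey-type isomorphism relating $\Ind_{\g_\oa}^{\g}$ and $\Res^{\g}_{\g_\oa}$, agrees with tensoring by $E_\g$. This step crucially exploits that projective-injective modules in $\mc O^{\mf p}$ are controlled by their $\g_\oa$-restrictions, which reduces the whole question to a twist by a one-dimensional character.

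Having the Nakayama formula in hand, the equivalence (b)$\Leftrightarrow$(c) is immediate: the Serre functor of $\mathscr P(\mc{PI}_{\rm int}^{\mf p})$ is the restriction of $\mathbf{N}$, which by the formula equals $E_\g\otimes(-)$; this is isomorphic to the identity if and only if $E_\g$ is trivial, since a one-dimensional $\g$-module $E$ with $E\otimes P\cong P$ for some nonzero $P$ must be trivial. The equivalence (a)$\Leftrightarrow$(b) is the standard statement that the endomorphism algebra of a basic additive generator of a Krull--Schmidt additive category is symmetric if and only if its Nakayama functor is isomorphic to the identity on the module category, applied to $\mc{PI}_{\rm int}^{\mf p}$; compare \cite[Proposition~2.3]{MM}.

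The principal obstacle is the identification of the Harish-Chandra bimodule $\mathbf{F}$ realizing $\mathbf{N}$ and the verification that its tensor action on $\mc{PI}^{\mf p}$ agrees with $E_\g\otimes(-)$ as a natural isomorphism of functors, not merely object-wise. This requires careful bookkeeping of parity throughout (in particular to distinguish $E_\g$ from its parity-shifted companion) together with a detailed analysis of how projective-injective modules behave under parabolic induction from $\mf p\cap \g_\oa$ to $\mf p$, so that the Nakayama functor for $\g$ can be compared fibrewise with the one for $\g_\oa$. A secondary concern is handling non-integral blocks of $\mc O^{\mf p}$, which is why the equivalence statements are naturally formulated for $\mc{PI}^{\mf p}_{\rm int}$ rather than the full subcategory $\mc{PI}^{\mf p}$.
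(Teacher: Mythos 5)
Your high-level strategy---realize $\mathbf{N}$ as a Harish-Chandra $(\g,\g_\oa)$-bimodule tensor functor, compare with the classical Nakayama functor for $\g_\oa$, and extract the Frobenius twist $E_\g$---is indeed the strategy the paper takes in Section~\ref{sect::3}, and your formal arguments for (a)$\Leftrightarrow$(b) (via \cite[Proposition~2.3]{MM}, i.e., Lemma~\ref{lem::4MM}) and (b)$\Leftrightarrow$(c) are correct. However, you explicitly defer what you call the ``principal obstacle,'' and this is exactly where the paper does its real work, and not in the way your sketch envisions. The paper's HC bimodule realization (Theorem~\ref{thm::realNak}) gives $\mathbf{N}\cong E_\g\otimes \mc L({}_-,\Delta^{\mf p_\oa}_\la)^{\oast}\otimes_{U_0}\Delta^{\mf p_\oa}_\la$ on a block $\mc O^{\mf p}_\Lambda$, but the paper does \emph{not} derive the object-level formula $\mathbf{N}(P)\cong E_\g\otimes P$, nor the functorial isomorphism on $\mc{PI}^{\mf p}_{\rm int}$, directly from this realization. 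For the object-level statement (Proposition~\ref{prop::10}), the paper instead invokes the partial coapproximation functor $C$ together with the commutation relations $\Res\circ C\cong C_0\circ\Res$ and $\Ind\circ C_0\cong C\circ\Ind$, cites \cite[Corollary~4.6]{MM} to get $\Res\mathbf{N}(P)\cong E_\g\otimes\Res C^2(P)$, and then uses the rigidity of injective modules under restriction (\cite[Lemma~3.3(ii), Theorem~4.4]{CCC}) to lift this to $\mathbf{N}(P)\cong E_\g\otimes C^2(P)$. Your remark that projective-injectives are ``controlled by their $\g_\oa$-restrictions'' gestures at this rigidity, but you do not say how the coapproximation mechanism enters or why it gives exactly $C^2$.

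Moreover, for the equivalence (b)$\Leftrightarrow$(c) one needs a \emph{natural} isomorphism $\mathbf{N}|_{\mc{PI}^{\mf p}_{\rm int}}\cong E_\g\otimes(-)$, and the paper obtains this not from the HC bimodule picture but from two further realizations of $\mathbf{N}$: via twisting functors ($\mathbf{N}\cong(E_\g\otimes{}_-)\circ\mathbf{T}^2$, Proposition~\ref{prop::twisting}) and via Joseph's completion functors ($\DGD\cong\mathbf{T}^2$, Proposition~\ref{prop::completion}), the latter furnishing a natural transformation $\mathbf{T}^2\to\mathrm{ID}_{\rm int}$ that is an isomorphism on $\mc{PI}^{\mf p}_{\rm int}$. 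So the paper deliberately sidesteps the difficulty you flag (verifying naturality directly from the bimodule action) by building these auxiliary realizations; your plan as written neither supplies that verification nor replaces it with something equivalent. In short: right skeleton, correct formal deductions once the Nakayama formula is granted, but the load-bearing technical content (the coapproximation argument and the twisting/completion realizations) is absent rather than merely abbreviated.
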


Note that (a properly simplified version of) Theorem \ref{main::thm}
	was proved in \cite{MM} under   the additional assumptions 	that $\g$ is either basic classical or of Q-type and the 	dimensional of $\g_\ob$ is even, see \cite[Theorem~5.9, Lemma~5.10]{MM}.  As a special case, it follows that $\mc{PI}^{\g} =\mc{PI}_{\rm int}^{\g}$ is symmetric if and only if $E_\g$ is trivial. For the general linear  Lie superalgebra $\mathfrak{gl}(m|n)$, this was proved in \cite{BS12}.

We propose the following conjecture:
\begin{conj} \label{main::coj} Let $\g$ be an arbitrary quasi-reductive Lie superalgebra
		and $\mf p$ a parabolic subalgebra of $\g$. 
 Then $\mc{PI}^{\mf p}$ is symmetric if and only if $S^{\rm top}(\g_\ob)$ is isomorphic to the trivial $\g$-module. 
\end{conj}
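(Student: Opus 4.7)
The strategy is to first establish the module-level identity $\mathbf{N}(P) \cong E_\g \otimes P$ for every projective-injective $P \in \mc O^{\mf p}$, and then deduce the three equivalences (a)--(c) as formal consequences. Following the strategy of \cite{MM}, I would realize $\mathbf N$ on $\mc O^{\mf p}$ by tensoring with an appropriate Harish-Chandra $(\g,\g_\oa)$-bimodule. A crucial modification relative to the basic/Q-type setting of \cite{MM} is that for strange Lie superalgebras there is no non-degenerate invariant bilinear form on $\g$, so one cannot use $(\g,\g)$-bimodules in the usual way; working instead with $(\g,\g_\oa)$-bimodules, as foreshadowed by the abstract and the introduction, should allow the construction to go through uniformly.

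Next, I would compute $\mathbf N(P)$ for $P \in \mc{PI}^{\mf p}$ directly. The key input is that a projective-injective module in $\mc O^{\mf p}$ can be realized (via Kac-type induction) from a projective-injective object over a parabolic of $\g_\oa$, and that the classical Nakayama functor is the identity on projectives for a reductive Lie algebra. The parity asymmetry of $\g_\ob$ then enters precisely as a twist by the one-dimensional $\g$-module $E_\g$, which by \cite[Corollary 2.11]{CCC} encodes the $\g$-structure dual to $S^{\rm top}(\g_\ob)$. Combining these ingredients should yield the natural isomorphism $\mathbf N(P) \cong E_\g \otimes P$ on $\mc{PI}^{\mf p}$.

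For the three-way equivalence, the implication (c) $\Rightarrow$ (b) is immediate: if $E_\g$ is trivial, then $\mathbf N(P) \cong P$ for every projective-injective $P$, so the derived Nakayama functor --- which realizes $\mathbb S$ on $\mathscr P(\mc{PI}_{\rm int}^{\mf p})$ --- is isomorphic to the identity. The implication (b) $\Rightarrow$ (a) is a standard fact: if the Serre functor on the perfect derived category of a finite-dimensional algebra $A$ is isomorphic to the identity, then $A$ is symmetric (this is essentially \cite[Proposition~2.3]{MM}). Finally, for (a) $\Rightarrow$ (c), symmetry of $\mc{PI}_{\rm int}^{\mf p}$ gives $\mathbf N \cong \id$ on this subcategory, whence $E_\g \otimes P \cong P$ for every integral projective-injective $P$; comparing $\mf h$-weights (equivalently, central characters) on any such non-zero $P$ forces the one-dimensional $\g$-module $E_\g$ to be trivial.

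The main obstacle is the first step: constructing and analyzing the Harish-Chandra $(\g,\g_\oa)$-bimodule realization of $\mathbf N$ in the general quasi-reductive setting. The arguments of \cite{MM} leaned on the existence of an invariant form and on the parity of $\dim\g_\ob$, neither of which is available for strange Lie superalgebras such as $\pn$. Tracking the $E_\g$-twist correctly while compensating for the missing invariant form, including on the non-integral blocks that appear in the full statement (even though they do not enter the equivalences (a)--(c) themselves), is where I expect the bulk of the technical work to lie.
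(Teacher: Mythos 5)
The statement you are trying to prove is \emph{Conjecture}~\ref{main::coj}, not Theorem~\ref{main::thm}, and the paper itself does not claim to prove it. Your proposal is, in substance, a sketch of Theorem~\ref{main::thm}: you establish the three-way equivalence (a)--(c) on the integral subcategory $\mc{PI}^{\mf p}_{\rm int}$. That is a different and strictly weaker assertion than Conjecture~\ref{main::coj}, which concerns the whole of $\mc{PI}^{\mf p}$, with non-integral blocks included. You note at the very end that ``non-integral blocks\dots do not enter the equivalences (a)--(c) themselves,'' which is precisely the point: the conjecture is about those blocks too, and no argument is offered for them.

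Concretely, the ``only if'' direction of the conjecture does follow from the paper's Corollary~\ref{coro::111} (a consequence of \cite[Theorem~4.4]{CCC}): for any coset $\Lambda$, the category $\mc{PI}^{\mf p}_{\Lambda}$ fails to be symmetric unless $S^{\rm top}(\g_\ob)$ is trivial. The genuinely open direction is: if $S^{\rm top}(\g_\ob)$ (equivalently $E_\g$) is trivial, is $\mc{PI}^{\mf p}_\Lambda$ symmetric for \emph{every} $\Lambda$? Your (c)$\Rightarrow$(b) step slides from the object-level isomorphisms $\mathbf{N}(P)\cong E_\g\otimes P$ (which Proposition~\ref{prop::10} does give for all $\Lambda$) to the conclusion that the Serre functor is naturally isomorphic to the identity. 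What is actually needed is a \emph{natural} isomorphism of functors $\mathbf{N}|_{\mc{PI}^{\mf p}_\Lambda}\cong E_\g\otimes({}_-)$; object-wise isomorphism does not give that. In the paper this naturality is supplied by Propositions~\ref{prop::twisting} and~\ref{prop::completion}, which realize $\mathbf{N}$ via twisting functors $\mathbf T$ and completion functors $\mathbf G$ and construct an explicit natural transformation $\mathbf{T}^2\to\text{ID}_{\rm int}$ that is an isomorphism on $\mc{PI}^{\mf p}_{\rm int}$. That whole machinery (longest Weyl group element $w_0$, the braid relations for $T_s$, the Andersen--Stroppel adjunction $(T_s,G_s)$) is set up only on the integral block, and it does not transfer to a non-integral $\Lambda$, where the relevant integral Weyl group is a proper subgroup of $W$ and there is no compatible $w_0$. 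This is exactly the obstruction that leaves the conjecture open, and your proposal neither resolves nor explicitly flags it as unresolved.

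A smaller point: in your (a)$\Rightarrow$(c) argument, comparing weight supports only shows that the weight $-\eta$ of $E_\g$ is zero; it does not by itself rule out $E_\g\cong\Pi\,\C_0$, i.e.\ a pure parity shift. The parity obstruction is genuine (e.g.\ for $\mf q(n)$ with $n$ odd, $E_\g=\Pi\C_0$ and the category is not symmetric), so a complete argument must address parity separately, as the paper's Lemma~\ref{lem::4MM} plus the functorial identification of the Serre functor does.

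So: your proposal reproduces the paper's route to Theorem~\ref{main::thm}, but it does not prove Conjecture~\ref{main::coj}, and the part of the conjecture that is actually open --- the ``if'' direction over non-integral blocks, which requires a block-wise natural isomorphism $\mathbf{N}\cong E_\g\otimes({}_-)$ without twisting/completion functors --- remains unaddressed.
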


This conjecture is motivated by
	the combination of Theorem \ref{main::thm}  and~\cite[Theorem 4.4]{CCC}, 	where it is shown that, for  any
	simple module $L$ in $\mc O^{\mf p}$, if the indecomposable projective 	cover $P_L$ of $L$ in $\mc O^{\mf p}$ happens to be injective, then the socle of 	$P_L$ is isomorphic to $S^{\rm top}(\g_\ob)\otimes L$. Therefore, $\mc{PI}^{\mf p}$ is not symmetric unless $S^{\rm top}(\g_\ob)$ is the trivial $\g$-module.

 The following theorem, which is our second main result, gives an affirmative answer to \cite[Conjecture 5.14]{MM}:
\begin{thm} \label{thmq::2} Suppose that $\g$ is the queer Lie superalgebra  $\mf q(n)$ (See Subsection~\ref{subsect::qn} for the definition). Let $\mf p$ be a parabolic subalgebra of $\g$. Then, the functor $\Pi^n$ is a Serre functor on $\mathscr{P}(\mc{PI}^{\mf p})$.
\end{thm}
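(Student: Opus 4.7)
The plan is to derive Theorem \ref{thmq::2} as a direct consequence of Theorem \ref{main::thm} combined with an explicit computation of the module $E_{\mathfrak{g}}$ in the case $\mathfrak{g} = \mathfrak{q}(n)$. The strategy is to reduce Serre duality on $\mathscr{P}(\mathcal{PI}^{\mathfrak{p}})$ to the identification of the Nakayama functor on a single projective-injective module and then to translate the abstract auto-equivalence $E_{\mathfrak{g}}\otimes({}_-)$ into the concrete functor $\Pi^n$.

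First, I would compute the relevant parity. For $\mathfrak{g} = \mathfrak{q}(n)$, the odd part $\mathfrak{g}_{\bar 1}$ is isomorphic to $\mathfrak{g}_{\bar 0} = \mathfrak{gl}(n)$ as a $\mathfrak{g}_{\bar 0}$-module under the adjoint action, so $\dim \mathfrak{g}_{\bar 1} = n^2$. Since $\mathfrak{q}(n)$ belongs to the Q-type Lie superalgebras from Kac's list, the general identification recalled in the introduction gives $E_{\mathfrak{g}} \otimes ({}_-) \cong \Pi^{\dim \mathfrak{g}_{\bar 1}}({}_-) = \Pi^{n^2}({}_-)$. Because $n^2 - n = n(n-1)$ is always even, the functors $\Pi^{n^2}$ and $\Pi^n$ are canonically isomorphic, and so $E_{\mathfrak{g}}\otimes({}_-)\cong \Pi^n({}_-)$ on all of $\mathcal{O}$.

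Next, I would apply Theorem \ref{main::thm}: for every projective-injective $P\in \mathcal{O}^{\mathfrak{p}}$ one has $\mathbf{N}(P) \cong E_{\mathfrak{g}}\otimes P \cong \Pi^n P$. Since $\Pi^n$ is an exact auto-equivalence of $\mathcal{O}^{\mathfrak{p}}$ preserving the subcategory $\mathcal{PI}^{\mathfrak{p}}$, the natural isomorphism of functors $\mathbf{N}|_{\mathcal{PI}^{\mathfrak{p}}} \cong \Pi^n|_{\mathcal{PI}^{\mathfrak{p}}}$ extends termwise to the category of bounded complexes of projective-injectives. Because the Serre functor on $\mathscr{P}(\mathcal{PI}^{\mathfrak{p}})$ is computed by the left derived Nakayama functor applied to a projective resolution, and objects of $\mathcal{PI}^{\mathfrak{p}}$ are already projective, this gives $\mathbb{S} \cong \Pi^n$ on $\mathscr{P}(\mathcal{PI}^{\mathfrak{p}})$.

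I do not anticipate a serious technical obstacle, since the heavy lifting is absorbed into Theorem \ref{main::thm}; the only subtlety is the parity arithmetic $n^2 \equiv n \pmod 2$ that collapses $\Pi^{n^2}$ to $\Pi^n$, and the verification that passing from the natural isomorphism on objects of $\mathcal{PI}^{\mathfrak{p}}$ to the derived category is automatic because $\Pi^n$ is exact and preserves injectives and projectives. If any subtlety arises, it would be in making the Serre-duality pairing natural with the parity shift, but this should follow from the naturality of the isomorphism in Theorem \ref{main::thm}.
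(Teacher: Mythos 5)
Your parity computation is correct and matches the paper's: for $\mathfrak{q}(n)$ one has $\dim\mathfrak{g}_{\bar 1}=n^2$, hence $E_{\mathfrak{g}}\otimes({}_-)\cong\Pi^{n^2}\cong\Pi^{n}$ since $n^2\equiv n\pmod 2$, and this is indeed the only arithmetic input. The overall strategy of reducing Theorem \ref{thmq::2} to the identification of the Nakayama functor on projective-injectives is also the paper's route.

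However, there is a genuine gap in the passage from Theorem \ref{main::thm} to the conclusion. Theorem \ref{main::thm} as stated gives an \emph{object-wise} isomorphism $\mathbf{N}(P)\cong E_{\mathfrak{g}}\otimes P$ for each projective-injective $P$; it does not assert a natural isomorphism of functors $\mathbf{N}\vert_{\mathcal{PI}^{\mathfrak{p}}}\cong\Pi^{n}\vert_{\mathcal{PI}^{\mathfrak{p}}}$, which is exactly what you need in order for the Serre duality pairing $\Hom(X,\Pi^n Y)\cong\Hom(Y,X)^{\ast}$ to inherit naturality from $\Hom(X,\mathbf{N}Y)\cong\Hom(Y,X)^{\ast}$. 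You write ``the natural isomorphism of functors ... extends termwise'', but you have not established that the isomorphisms $\mathbf{N}(P)\cong\Pi^n P$ can be chosen compatibly with morphisms in $P$. Your closing sentence acknowledges this as the only possible subtlety and dismisses it by appealing to ``the naturality of the isomorphism in Theorem \ref{main::thm}'', but that naturality is precisely what Theorem \ref{main::thm} does not provide. Moreover, the functorial version proved in the paper via Propositions \ref{prop::twisting} and \ref{prop::completion} (the equivalences (b) and (c) of Theorem \ref{main::thm}) is stated only for the integral block $\mathcal{PI}^{\mathfrak{p}}_{\rm int}$, whereas Theorem \ref{thmq::2} concerns all of $\mathcal{PI}^{\mathfrak{p}}$.

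The paper fills this gap with Lemma \ref{lem::ParCoApp}: using that the Nakayama functor naturally commutes with projective functors (a consequence of the explicit Harish--Chandra bimodule realization in Theorem \ref{thm::realNak}), one obtains a natural isomorphism of functors $\mathbf{N}\cong\Pi^n\circ C^2$ on every block $\mathcal{O}^{\mathfrak{p}}_{\Lambda}$, where $C$ is partial coapproximation; restricting to $\mathcal{PI}^{\mathfrak{p}}_{\Lambda}$ and using $C^2\cong\mathrm{ID}$ there yields the required natural isomorphism $\mathbf{N}\vert_{\mathcal{PI}^{\mathfrak{p}}}\cong\Pi^n$. Your proof as written skips this mechanism entirely, so it does not actually establish the Serre functor property for $\Pi^n$; it only shows that $\Pi^n$ has the right effect on isomorphism classes of objects.
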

 Our third main result is the following:
\begin{thm} \label{thmp::3} Suppose that $\g  = [\pn,\pn]$ is the simple  Lie superalgebra of type P (See Subsection~\ref{subsect::pen} for the definition). Let $\mf p$ be a parabolic subalgebra of $\g$. Then we have 	\begin{itemize} 	\item[(i)] The Serre functor on $\mathscr{P}(\mc{PI}^{\mf p}_{\rm int})$ is isomorphic to  $\Pi^n$. 	\item[(ii)] $\mc{PI}^{\mf p}_{\rm int}$ is symmetric if and only if $n$ is even. 		\end{itemize} \end{thm}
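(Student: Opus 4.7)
The plan is to apply Theorem~\ref{main::thm} with $\g = [\pn,\pn]$, which reduces both parts of the statement to identifying the one-dimensional $\g$-module $E_\g$.

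Recall that $[\pn,\pn]_\oa = \mathfrak{sl}(n)$, while $[\pn,\pn]_\ob = \pn_\ob \cong S^2 V \oplus \Lambda^2 V^*$ where $V = \mathbb{C}^n$; in particular $\dim \g_\ob = n^2$. Hence the top symmetric power $S^{\mathrm{top}}(\g_\ob) = S^{n^2}(\g_\ob)$ is a one-dimensional super vector space concentrated in parity $n^2 \equiv n \pmod 2$. I plan to show that, as a $\g$-module, $S^{\mathrm{top}}(\g_\ob) \cong \Pi^n \mathbb{C}$. The action of $\g_\oa$ preserves parity and thus defines a one-dimensional representation of the semisimple Lie algebra $\mathfrak{sl}(n)$, which is forced to be trivial. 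The action of $\g_\ob$ reverses parity, so it sends the unique nonzero parity component of $S^{\mathrm{top}}(\g_\ob)$ into the opposite parity component, which is zero. Consequently $\g$ acts trivially on a one-dimensional space of parity $n$. The defining property $S^{\mathrm{top}}(\g_\ob) \otimes E_\g \cong \mathbb{C}$ then forces $E_\g \cong \Pi^n \mathbb{C}$.

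With $E_\g$ identified, part (i) is immediate: by Theorem~\ref{main::thm}, the Nakayama functor on $\mc{PI}^{\mf p}_{\rm int}$ is isomorphic to $E_\g \otimes (-) = \Pi^n(-)$, and this then provides the Serre functor on $\mathscr{P}(\mc{PI}^{\mf p}_{\rm int})$. Part (ii) then follows from the equivalence (a)$\Leftrightarrow$(c) in Theorem~\ref{main::thm}: the subcategory $\mc{PI}^{\mf p}_{\rm int}$ is symmetric if and only if $E_\g$ is the trivial module, which happens precisely when $\Pi^n \cong \mathrm{id}$, i.e., when $n$ is even.

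I do not foresee any substantial obstacle. The only step deserving genuine care is the verification that the $\g$-action on the one-dimensional space $S^{\mathrm{top}}(\g_\ob)$ is trivial; this rests on the semisimplicity of $\mathfrak{sl}(n)$, which kills any potential determinant character coming from $\g_\oa$, together with the parity argument above that takes care of $\g_\ob$. It is worth remarking that this is precisely where passing from $\pn$ to its derived subalgebra $[\pn,\pn]$ simplifies the problem: for $\pn$ itself, $\g_\oa = \mathfrak{gl}(n)$ carries a nontrivial determinant character, and one would need to compute the $\mathfrak{gl}(n)$-character of $\Lambda^{\mathrm{top}}(\g_\ob)$ explicitly to identify $E_\g$.
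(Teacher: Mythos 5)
Your proposal is correct and follows essentially the same route as the paper: identify $E_{[\pn,\pn]}$ as $\Pi^n\mathbb{C}$ by noting that $[\pn,\pn]_{\bar 0}\cong\mathfrak{sl}(n)$ is semisimple (hence has only the trivial one-dimensional representation) and $\dim\g_{\bar 1}=n^2\equiv n\pmod 2$, then invoke Theorem~\ref{main::thm} (and the realization of the Serre functor on $\mathscr{P}(\mc{PI}^{\mf p}_{\rm int})$ as $E_\g\otimes(-)$ from its proof) together with Lemma~\ref{lem::4MM}. The paper states the identification $E_{\pn'}\otimes(-)\cong\Pi^n(-)$ more tersely and phrases the reduction as "the proof of Theorem~\ref{main::thm} mutatis mutandis," but the underlying argument is the same; your explicit semisimplicity-plus-parity justification for $E_\g$ fills in exactly the detail the paper leaves implicit.
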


\vskip0.2cm

\subsection{Structure of the paper.}
The paper is organized as follows. In Section \ref{sect::prel}, we provide some background materials on the representation categories of  quasi-reductive Lie superalgebras and collect general technical results that are to be used in the remainder of the paper. In Section \ref{sect::3}, we provide three realizations of the Nakayama functors for any quasi-reductive Lie superalgebras. The proof of Theorem \ref{main::thm} is given in \mbox{Subsection~\ref{sect::33}}. Finally, we focus on the P-type and Q-type Lie superalgebras in Section \ref{sect::pnqn}. The proofs of Theorem \ref{thmq::2} and \ref{thmp::3} are given in Subsections \ref{subsect::pen} and \ref{subsect::qn}, respectively.

\vskip 0.5cm
 {\bf Acknowledgment.} The first author is partially supported by National Science  and Technology Council grants of the R.O.C., and further acknowledge support from the National Center for Theoretical Sciences. The second author is partially supported by the Swedish Research Council.
\vskip 0.5cm
\section{Preliminaries} \label{sect::prel}
 Throughout the paper, we fix the field of complex numbers $\C$ as the ground field. We will always work with superalgebras, superspaces and supermodules.  Morphisms in the category of superspaces are assumed to preserve the $\Z_2$-grading, and the same thus holds for morphisms of superalgebras or modules over superalgebras. Unless mentioned otherwise, we consider left (super)modules.   
 For any Lie superalgebra $\g =\g_\oa\oplus \g_\ob$, we denote its universal enveloping algebra by $U = U(\g)$.  Similarly, we denote the universal enveloping algebra of $\g_\oa$ by $U_0= U(\g_\oa)$, which a superalgebra concentrated in degree zero. 
 In the following we denote by $U$-Mod and $U_0$-Mod the category of all $\g$- and $\g_\oa$-(super)modules.   We denote by $\Pi({}_-): U\text{-Mod}\rightarrow U\text{-Mod}$   the parity change functor, that is, for any $\g$-module $V =V_\oa\oplus V_\ob$ we have $(\Pi V)_\oa =V_\ob,~ (\Pi V)_\ob =V_\oa.$  

 \subsection{Induction, coinduction and restriction functors} \label{sect::21}
 In the paper, we assume that $\mf g$ is a quasi-reductive Lie superalgebra, that is, $\g_\oa$ is a reductive Lie algebra and $\g_\ob$ is semisimple  under the adjoint action of $\g_\oa$. We have the usual restriction functor $\Res({}_-)$   from  $U$-Mod to $U_0$-Mod. Furthermore, $\Res({}_-)$ has both left and right adjoints, namely, 
 $$\Ind({}_-):=U\otimes_{U_0}{}_-\qquad\mbox{and}\qquad \Coind({}_-):=\Hom_{U_0}(U,{}_-),$$ respectively.
 By  \cite[Theorem~2.2]{BF}, we have 
 \begin{equation}\label{eqBF}\Ind({}_-)\;\cong\; \Coind(S^{\rm top}(\g_\ob)\otimes {}_-).\end{equation}  
  As mentioned in Subsection \ref{sect::intro::1}, the top symmetric power $S^{\rm top}(\g_\ob)$ of $\g_\ob$ is the restriction of a $\g$-module, which we will denote by the same expression. It follows by \cite[Proposition~6.5]{Ka88} that 
 	\begin{equation}\label{eq::SInd}
 		(S^{\rm top}(\g_\ob)\otimes{}_-)\circ\,\Ind \cong\Ind\circ\, (S^{\rm top}(\g_\ob)\otimes{}_-).
 	\end{equation}
 
 Recall Kac's list \cite{Ka1} of the {\em basic classical, $Q$-type and $P$-type} Lie superalgebras:  \vskip0.2cm
 	\leftline{$\text{(Basic classical)}~\mathfrak{gl}(m|n),\,\,\mathfrak{sl}(m|n),\,\,\mathfrak{psl}(n|n),\,\,\mathfrak{osp}(m|2n),\,\, D(2,1;\alpha),\,\, G(3),\,\, F(4),$}
 	\vskip0.2cm
 	\leftline{$\text{(Q-type)}\hskip0.4cm\mathfrak{q}(n),\,\,\mathfrak{sq}(n),\,\, \mathfrak{pq}(n)\mbox{ and }\mathfrak{psq}(n),$} 
 	\vskip0.2cm
 	\leftline{$\text{(P-type)}\hskip0.4cm\pn,\,\,[\pn,\pn],$}  see also \cite{ChWa12, Mu12}. The Q-type and P-type superalgebras in this list are usually referred to as {\em strange} superalgebras. When $\g$ is basic classical or Q-type, we have $\Coind({}_-)\cong \Pi^{\dim \g_\ob}\circ \Ind({}_-)$, see \cite{BF, Go00, Ge98}.

\subsection{Parabolic category $\mc O^{\mf p}$}  \label{sect::22}
We follow the notion of parabolic decompositions of superalgebras from \cite{DMP,Ma14}.  First, we  fix a Cartan subalgebra $\h_{\oa}$ of $\mf g_{\oa}$. Denote by  $\Phi\subset\mf h_{\oa}^\ast$ the set of roots of $\g$. For a root $\alpha \in \Phi$, we denote by  $\g^\alpha$   the   root space associated with $\alpha$, i.e.,  
 	$\mf g^\alpha=\{X\in\mf g\,|\, [h,X]=\alpha(h)X,\,\mbox{ for all $h\in\mf h_{\oa}$}\}.$ For a given vector $H\in\h_{\oa}$, we can define the following subalgebras of $\mf g$:
 	\begin{equation*}\label{deflu}\mf l:=\bigoplus_{\Real \alpha(H)=0} \mf g^\alpha,\quad \mf u^+:=\bigoplus_{\Real \alpha(H)>0} \mf g^\alpha, \quad \mf u^-:=\bigoplus_{\Real \alpha(H)<0} \mf g^\alpha,\end{equation*} 
 	where $\Real(z)$ denotes the real part of $z\in \mathbb C$. We can arrange these subalgebras into a {\em parabolic  decomposition} of $\g$ as follows: $$ \g=\mf u^-\oplus \mf l \oplus \mf u^+.$$  The corresponding  parabolic subalgebra of $\mf g$ is defined as the subalgebra $$\mf p:= \mf l\oplus\mf u^+.$$ We write $\mf p(H)$ for $\mf p$ when it is necessary to keep track of $H\in \h_\oa$. In the case when $\mf l = \g^0$, we write  $\mf n^{\pm}=\fu^{\pm}$ and $\mf h =\mf l$ and call such a decomposition $\mf g = \mf n^-\oplus\mf h\oplus\mf n^+$   a triangular decomposition of $\mf g$. In this way, Borel subalgebras are just parabolic
 		subalgebras associated to triangular decompositions.  We define the Weyl group $W$ of $\mf g$ as the Weyl group of the underlying Lie algebra  $\g_\oa$.  Then it acts naturally on $\h^\ast_\oa$, by definition. Furthermore, the usual dot-action of $W$ on $\h^\ast_\oa$ is defined as $w\cdot \la:= w(\la+\rho) -\rho$, for $w\in W$ and $\la \in \h^\ast_\oa$,  where $\rho$ denotes the half of the sum of all roots in $\mf n_\oa^+$.
  
 Fix a triangular decomposition $\mf g = \mf n^-\oplus\mf h\oplus\mf n^+$ of $\g$. We let $\mc O$ denote the category of finitely generated $\g$-modules  $M$ such that  $M$ is semisimple over $\h_\oa$ and locally finite over $U(\mf n^+)$. This category $\mc O$ can be alternatively defined as  	the category of all $\g$-modules that restrict to the classical category $\mc O^\oa$ for the triangular decomposition $\g_\oa =\mf n^-_\oa \oplus \mf h_\oa \oplus \mf n^+_\oa$ of $\g_\oa$, see \cite{BGG76} and \cite{Hu08}. 
 
Let $\mc F^\oa$ be the full subcategory of $\mc O^\oa$ consisting of all finite dimensional semisimple $\mf g_{\oa}$-modules. We denote by~$\mc F$ the full subcategory of $\mc O$ consisting of all  finite dimensional $\mf g$-modules which restrict to objects in~$\mc F^\oa$. We let  $\Upsilon\subset\mf h^\ast_\oa$ be  the set of all integral weights,  that is, weights appearing in modules in~$\mc F$.

    For $\la\in \h^\ast_\oa$,  we consider the Verma module $\Delta_\la = U_0\otimes_{U(\mf b_\oa)}\C_\la$ without specifying in which parity it is assumed to be. For each coset $\Lambda\in \h^\ast_\oa/\Upsilon$, we denote by $\mc O_\Lambda$ the full subcategory of $\mc O$ consisting of all modules whose weights belong to $\Lambda$. If $\la \in \Lambda$ is a dominant and regular weight, then it follows by \cite[Lemma 2.3]{CC} that the  category of projective objects in $\mc O_\Lambda$ consists of all direct summand of $\g$-modules of the form $V\otimes \Ind\Delta_\la$ with $V\in \mc F$. 
 
  We fix a parabolic subalgebra $\mf p$ of $\mf g$ containing the Borel subalgebra $\mf b$. Then the corresponding parabolic category $\mc O^{\mf p}$ is defined to be the full subcategory of $\mc O$ consisting of all objects   on which the action of $U(\mf p)$ is locally finite. For each $\Lambda\in \h^\ast_\oa/\Upsilon$, we define $\mc O^{\mf p}_\Lambda$ as the intersection of $\mc O_\Lambda$ with $\mc O^{\mf p}$.   
Similarly, the parabolic category $\mc O^{\mf p_\oa}$ of $\g_\oa$-modules and the full subcategory $\mc O^{\mf p_\oa}_\Lambda$ are defined in the same way.  Also, we define the  subcategories $\mc O^{\mf p}_{\rm int} = \mc O^{\mf p}_\Upsilon$, $\mc O_{\rm int} = \mc O_{\Upsilon}$ and $\mc O^{\mf p_\oa}_{\rm int} = \mc O^{\mf p}_\Upsilon$, respectively. Finally,	we let $X_{\mf p_\oa}$ denote the set of all  $\mf p_{\oa}$-dominant weights $\lambda\in\mf h^\ast_\oa$.  	For    $\la \in  X_{\fp_{\oa}}$, we denote by  $\Delta^{\mf p_\oa}_\la\in \mc O^\oa$  the parabolic Verma module over $\g_\oa$  of the highest weight $\la$.

\subsection{Hairsh-Chandra bimodules}  
Let $R,S$ be two superalgebras in the set $\{U,U_0\}$. For an $R$-$S$-bimodule (which is the same thing as a module over $R\otimes_\mathbb{C} S^{\rm op}$) $N$,  let $N^{\ad}$ denote the associated adjoint $U_0$-module. We set  $N^{\g_\oa}\subseteq N^\ad$ to be the submodule of $\g_\oa$-invariants in $N$. We have the left exact functor 
$$\mc L({{}_-},{}_-): (S\mbox{-Mod})^{\op}\times R\mbox{-Mod}\;\to\;R\otimes_{\C}S^{\op}\mbox{-Mod},$$ which takes
the maximal $R$-$S$-submodule of~$\Hom_{\C}(M,N)$ such that~$\Hom_{\C}(M,N)^{\ad}$ is a (possibly infinite) direct sum of modules  in~$\mc F^\oa$. We will use the same notation $\mc L$ for the functor corresponding
	to all possible choices of~$R$ and~$S$.

We set  $\mc B$ to be the category of finitely generated $U$-$U_0$-bimodules $N$ for which $N^{\ad}$ is a Harish-Chandra $U_0$-$U_0$-bimodule, that is, it is a (possibly infinite) direct sum of modules in~$\mc F^\oa$ and each simple  module appears in $N^\ad$ with finite
multiplicity.  For a two-sided ideal $I\subset U_0$,  let $\mc B(I)$ be the full subcategory of~$\mc B$ consisting of bimodules $X$ such that~$XI=0$.

  For any $M\in \g$-Mod, we denote by 
 \begin{enumerate}
 	\item[$\bullet$] $\mc F\otimes M$ the full subcategory of $U$-Mod  consisting  of  all $\mf g$-modules of the form $V\otimes M$ with~$V\in \mc F$, 
 	\item[$\bullet$]  $\langle \mc F\otimes M\rangle$   the abelian category of subquotients of modules in~$\mc F\otimes M$,
	\item[$\bullet$]    $\coker(\mc F\otimes M)$ the category of modules which are presented by modules in $\mc F\otimes M$, that is, it is the category of all $\g$-modules $X$ that have a presentation of the form
	$$V_1\otimes  M\rightarrow V_2\otimes  M\rightarrow X\rightarrow 0,~\text{where $V_1, V_2\in \mc F$}.$$  
 \end{enumerate}

The idea of the following lemma is taken from
\cite[Theorem 3.1]{CC}. It originates from \cite{MiS97}.
\begin{lem} \label{lem::5CC}
	 Let $N$ be a $\g_\oa$-module and set $I:=\Ann_{U_0}(N)$ to be the    annihilator ideal of $N$. If
		\begin{enumerate}[$\bullet$]
			\item  the natural representation map  $ U_0 \rightarrow  \mc L(N,N)$ is surjective, and \label{ThmMS1a}
			\item the module $N$ is projective in~$\langle \mc F^\oa\otimes N\rangle$, \label{ThmMS1b}
		\end{enumerate}
		then we have an equivalence of categories
		$$({}_-)\otimes_{U_0}N\,:\;\;\; \mc B(I)\,\to\, \coker(\mc F\otimes \Ind N),$$
		with inverse $\mc L(N,{}_-)$.
\end{lem}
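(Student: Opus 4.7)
The plan is to follow the Bernstein--Gelfand / Mili\v{c}i\'{c}--Soergel strategy (as adapted for superalgebras in \cite[Theorem~3.1]{CC}), namely to exhibit $(-)\otimes_{U_0}N$ and $\mc L(N,-)$ as a mutually inverse pair of functors by first verifying that unit and counit are isomorphisms on a distinguished ``universal'' generator, and then extending to the whole category by a right-exact/five-lemma argument. First one checks that the functors do land in the claimed categories: since $I=\Ann_{U_0}(N)$, the right action of $U_0$ on $\mc L(N,M)\subseteq\Hom_\C(N,M)$ (given by $(\phi\cdot u)(n)=\phi(un)$) is killed by $I$, so $\mc L(N,-)$ takes values in $\mc B(I)$; conversely, any $B\in\mc B(I)$ admits a presentation $V_2\otimes(U/UI)\to V_1\otimes(U/UI)\to B\to 0$ with $V_i\in\mc F$, and since $(U/UI)\otimes_{U_0}N=U\otimes_{U_0}N=\Ind N$, right exactness of the tensor product shows $B\otimes_{U_0}N$ lies in $\coker(\mc F\otimes\Ind N)$.

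The heart of the argument is the key isomorphism on generators:
\[
\mc L(N,\Ind N)\;\cong\;U/UI
\]
as $U$-$U_0$-bimodules, via the natural map $u\mapsto(n\mapsto u\otimes n)$. Using the PBW decomposition $U\cong U_0\otimes\Lambda(\g_\ob)$ as $\g_\oa$-modules together with the fact that $U^{\ad}$ decomposes as a direct sum of modules in $\mc F^\oa$ (because $\g$ is quasi-reductive), one filters $U$ by finite-dimensional $\g_\oa$-submodules $V\subset U^{\ad}$ and reduces the computation to showing $\Hom_{U_0}(N,V\otimes N)\cong V\otimes(U_0/I)$ for $V\in\mc F^\oa$. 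This is precisely where both hypotheses are used: the surjectivity of $U_0\to\mc L(N,N)$ provides the case $V=\C$ (identifying $\mc L(N,N)\cong U_0/I$), and projectivity of $N$ in $\langle\mc F^\oa\otimes N\rangle$ upgrades this to a ``projection formula''
\[
\mc L(N,V\otimes N)\;\cong\;V\otimes\mc L(N,N),
\]
by using that $V\otimes N$ is a direct summand of a module of the form $W\otimes N$ in a controlled way and that projectivity makes $\Hom_{U_0}(N,-)$ exact on $\langle\mc F^\oa\otimes N\rangle$.

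Once the key identification is established, the counit $\epsilon_M\colon\mc L(N,M)\otimes_{U_0}N\to M$ is then an isomorphism on $M=V\otimes\Ind N$ (using that tensoring with $V\in\mc F$ commutes with $\Ind$ as in~\eqref{eq::SInd}, and with both $(-)\otimes_{U_0}N$ and $\mc L(N,-)$), and, dually, the unit $\eta_B\colon B\to\mc L(N,B\otimes_{U_0}N)$ is an isomorphism on $B=V\otimes(U/UI)$. A standard five-lemma argument, applied to the presentations of a general $B\in\mc B(I)$ and $X\in\coker(\mc F\otimes\Ind N)$ recalled above, then extends these isomorphisms from generators to all objects, establishing the equivalence.

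The main obstacle will be the projection formula $\mc L(N,V\otimes N)\cong V\otimes\mc L(N,N)$, or equivalently the identification $\mc L(N,\Ind N)\cong U/UI$. Both hypotheses are genuinely needed here: without the surjectivity of $U_0\twoheadrightarrow\mc L(N,N)$ the map would fail to be surjective in ``degree zero'' of the PBW filtration, and without projectivity of $N$ in $\langle\mc F^\oa\otimes N\rangle$ one cannot pass from this degree-zero control to the full statement because $\Hom_{U_0}(N,-)$ need not commute with the relevant direct sum decompositions. Everything else is a formal consequence of the adjunction formalism together with the right exactness of $(-)\otimes_{U_0}N$.
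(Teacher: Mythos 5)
The paper does not actually prove this lemma: it records the statement and refers to~\cite[Theorem~3.1]{CC} (and ultimately to~\cite{MiS97}) for the argument. Your blind proposal follows exactly the Bernstein--Gelfand / Mili\v{c}i\'c--Soergel template that those references use, so the overall route --- check the functors land in the right categories, establish the key bimodule isomorphism $\mc L(N,\Ind N)\cong U/UI$, verify unit and counit on the generators $V\otimes(U/UI)$ and $V\otimes\Ind N$, and extend along presentations --- is the intended one and is structurally sound.

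One point of attribution is off. You assign the ``projection formula'' $\mc L(N,V\otimes N)\cong V\otimes\mc L(N,N)$ (for $V\in\mc F^{\bar 0}$ finite dimensional) to the projectivity hypothesis, but this isomorphism is in fact a formal consequence of the definition of $\mc L$ together with $V$ being finite dimensional and semisimple: one identifies $\Hom_\C(N,V\otimes N)\cong V\otimes\Hom_\C(N,N)$ with its tensor-product adjoint action, and an evaluation argument (applying $V^*\otimes({}_-)$ and $\mathrm{ev}:V^*\otimes V\otimes A\to A$ to a generating set) shows the maximal $\g_{\bar 0}$-locally-finite semisimple part is exactly $V\otimes\mc L(N,N)$, with no condition on $N$ beyond $V\in\mc F^{\bar 0}$. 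The surjectivity of $U_0\to\mc L(N,N)$ is then the only hypothesis needed to get $\mc L(N,\Ind N)\cong U/UI$ via PBW. Where projectivity of $N$ in $\langle\mc F^{\bar 0}\otimes N\rangle$ genuinely enters is the step you only mention in passing at the end: it makes $\Hom_{U_0}(N,{}_-)$, and hence $\mc L(N,{}_-)$, \emph{exact} on the relevant subcategory, which is what lets you push the counit isomorphism from the generators $V\otimes\Ind N$ to arbitrary objects of $\coker(\mc F\otimes\Ind N)$ by the five lemma (and similarly gives right-exactness needed on the unit side). With that reallocation of hypotheses, your sketch is a correct rendering of the cited argument.
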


 Recall that we fix a parabolic subalgebra $\mf p$ of $\g.$
  Let $\la \in X_{\mf p_\oa}$ be  a regular and dominant weight  and set $\Lambda=\lambda+\Upsilon$.  Denote by~$I_\lambda^{\mf p_\oa}:= \Ann_{U_0}\Delta^{\mf  p_\oa}_\la \subset U_0$ the  annihilator ideal  of $\Delta_\la^{\mf p_\oa}$.     The following lemma extends \cite[Corollary 3.2]{CC} to the parabolic category $\mc O^{\mf p}$: 
\begin{lem} \label{lem::CorEqiv2} 
With notation as above,  we  have mutually inverse equivalences ${}_-\otimes_{U_0}\Delta_\la^{\mf p_\oa}$ and $\mc L(\Delta^{\mf p_\oa}_\la,{}_-)$ between 
	$\mc B(I_\la^{\mf p_\oa})$ and $\mc O^{\mf p}_{\Lambda}.$
\end{lem}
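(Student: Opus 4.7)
The plan is to deduce Lemma 2.2 from Lemma 2.1 applied to $N=\Delta_\la^{\mf p_\oa}$, together with an identification of $\coker(\mc F\otimes\Ind\Delta_\la^{\mf p_\oa})$ with the parabolic block $\mc O^{\mf p}_{\Lambda}$. Once this identification is in place, the two functors of Lemma 2.1 specialize to those in the statement and give the desired mutually inverse equivalences.

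To invoke Lemma 2.1, I would verify its two hypotheses for the parabolic Verma module $\Delta_\la^{\mf p_\oa}$ over $\g_\oa$. Since $\la$ is $\mf p_\oa$-dominant and regular dominant for $\g_\oa$, the module $\Delta_\la^{\mf p_\oa}$ is the projective cover of the simple module $L(\la)$ in $\mc O^{\mf p_\oa}_\Lambda$; in particular, $\Delta_\la^{\mf p_\oa}$ is projective in $\langle\mc F^\oa\otimes\Delta_\la^{\mf p_\oa}\rangle$, since this category is contained in $\mc O^{\mf p_\oa}_\Lambda$. The surjectivity of the natural map $U_0\to\mc L(\Delta_\la^{\mf p_\oa},\Delta_\la^{\mf p_\oa})$ is a (positive) solution to Kostant's problem for parabolic Verma modules at regular dominant highest weights; it follows either by reducing, via translation, to the known case of the regular dominant Verma module (where the map coincides with Bernstein--Gelfand's theorem), or by appealing directly to standard results in the parabolic setting (cf.\ \cite{MiS97} and \cite[Theorem 3.1]{CC}, whose proof handles exactly this input).

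It then remains to identify $\coker(\mc F\otimes\Ind\Delta_\la^{\mf p_\oa})$ with $\mc O^{\mf p}_{\Lambda}$. For this I would show that every indecomposable projective object in $\mc O^{\mf p}_{\Lambda}$ is a direct summand of $V\otimes\Ind\Delta_\la^{\mf p_\oa}$ for some $V\in\mc F$. On the level of $\g_\oa$, projectives in $\mc O^{\mf p_\oa}_\Lambda$ are direct summands of $V_0\otimes\Delta_\la^{\mf p_\oa}$ for $V_0\in\mc F^\oa$; applying $\Ind=U\otimes_{U_0}({}_-)$, which is exact and sends projectives to projectives, and using the tensor identity $\Ind(V_0\otimes\Delta_\la^{\mf p_\oa})\cong V_0\otimes\Ind\Delta_\la^{\mf p_\oa}$ together with the fact that any $V_0\in\mc F^\oa$ is a $\g_\oa$-summand of $\Res V$ for some $V\in\mc F$, one concludes that projectives in $\mc O^{\mf p}_{\Lambda}$ are summands of $V\otimes\Ind\Delta_\la^{\mf p_\oa}$. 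Since every module in $\mc O^{\mf p}_{\Lambda}$ admits a two-step projective presentation, the equality $\coker(\mc F\otimes\Ind\Delta_\la^{\mf p_\oa})=\mc O^{\mf p}_{\Lambda}$ follows.

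The main obstacle I expect is the projective description in the parabolic setting: while the analogous statement in the non-parabolic case is the content of \cite[Lemma 2.3]{CC}, transferring it to $\mc O^{\mf p}$ requires care that $\Ind\Delta_\la^{\mf p_\oa}$ actually lies in $\mc O^{\mf p}$ (i.e.\ that $U(\mf p)$ acts locally finitely on it), and that tensoring with $V\in\mc F$ preserves this local finiteness. Both follow because $\Delta_\la^{\mf p_\oa}$ is $U(\mf p_\oa)$-locally finite, induction from $\g_\oa$ to $\g$ along a PBW basis of $\mf u^+_\ob\subset\mf p$ preserves $U(\mf p)$-local finiteness, and $\mc F$ consists of finite-dimensional modules. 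Everything else is then a routine unpacking of Lemma 2.1.
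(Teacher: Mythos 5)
Your proposal is correct and follows essentially the same route as the paper: apply Lemma~\ref{lem::5CC} with $N=\Delta_\la^{\mf p_\oa}$, check its two hypotheses, and then identify $\coker(\mc F\otimes\Ind\Delta_\la^{\mf p_\oa})$ with $\mc O^{\mf p}_\Lambda$ by showing that the modules $V\otimes\Ind\Delta_\la^{\mf p_\oa}$, $V\in\mc F$, exhaust the projectives in $\mc O^{\mf p}_\Lambda$. The one cosmetic difference is how the first hypothesis (Kostant's problem for $\Delta_\la^{\mf p_\oa}$) is sourced: the paper gets surjectivity of $U_0\to\mc L(\Delta_\la^{\mf p_\oa},\Delta_\la^{\mf p_\oa})$ directly from the fact that $\Delta_\la^{\mf p_\oa}$ is a quotient of the projective Verma module $\Delta_\la$ (citing \cite[6.9(10)]{Ja83}), while you propose a translation-functor reduction or a citation of \cite[Theorem~3.1]{CC} / \cite{MiS97}; both are fine, and your fallback option is precisely the paper's source.
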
 
\begin{proof} Let $\Delta:= \Delta_\la^{\mf p_\oa}$. First, we note that the natural representation map $$U(\g) \rightarrow ~\mc L(\Delta,\Delta)$$ is surjective, since $\Delta$ is a quotient of the projective Verma module $\Delta_\la$, see e.g., \mbox{\cite[6.9~(10)]{Ja83}}. By Lemma \ref{lem::5CC}, it follows that the functors  ${}_-\otimes_{U_0}\Delta$ and $\mc L(\Delta,{}_-)$ are mutually inverse equivalences between $\mc B(I_\la^{\mf p_\oa})$ and   $\coker(\mc F\otimes \Ind \Delta)$. By definition, the latter is the full subcategory of $\mc O^{\mf p}$ consisting of $\g$-modules $M$ that have a presentation of the form
	$$V_1\otimes \Ind \Delta\rightarrow V_2\otimes \Ind \Delta\rightarrow M\rightarrow 0,$$ where $V_1, V_2\in \mc F$. Since projective modules in $\mc O_\Lambda^{\mf p}$ are exhausted by direct summands of modules of the form $\Ind (V\otimes \Delta)\subset \Ind V\otimes \Ind \Delta$, for  $V\in \mc F^\oa$, we may conclude that $\coker(\mc F\otimes \Ind \Delta)=\mc O_\Lambda^{\mf p}$. This completes the proof. 
\end{proof}

\subsection{Symmetric algebras and Serre functors} 
Let $\mc C$ be a $\C$-linear additive category with finite-dimensional morphism spaces.    We follow the infinite-dimensional setup of symmetric algebras introduced in \cite[Subsections 2.3, 2.4]{MM}.  Let   $\mc C\text{-mod}$ and $\text{mod-}\mc C$ denote the corresponding categories of left and right $\mc C$-modules.  Assume that $\mc C$ is {\em strongly locally finite} in the sense \cite[Subsection 2.3]{MM}. This means that    
\begin{enumerate}[$($i$)$]
	\item\label{cond0} $\mathcal{C}$ is {\em basic} in the sense that 
	different objects from $\mathcal{C}$ are not isomorphic;
	\item\label{cond1} for any $X,Y\in \mathcal{C}$, the $\mathbb C$-vector space 
	$\Hom_{\mathcal{C}}(X,Y)$ is finite dimensional;
	\item\label{cond2} for any $X\in \mathcal{C}$, there exist only finitely many
	$Y\in \mathcal{C}$ such that \begin{align*}
		&\text{either }\Hom_{\mathcal{C}}(X,Y)\neq 0~~~\text{or~~}\Hom_{\mathcal{C}}(Y,X)\neq 0;
	\end{align*}
	\item\label{cond4} for any $X\in \mathcal{C}$, the endomorphism algebra
	$\Hom_{\mathcal{C}}(X,X)$ is local and basic.
\end{enumerate} 
  We denote by $({}_-)^\ast$ the natural duality functor between  $\mc C\text{-mod}$ and $\text{mod-}\mc C$. Then $\mc C$ is called {\em symmetric} provided that the $\mc C$-$\mc C$-bimodules $\mc C$ and $\mc C^\ast$ are isomorphic. 

   Recall that $\mathscr{P}(\mc C)$ denotes the full subcategory of $\mc D^-(\mc C)$ consisting of all  perfect complexes.  By definition, an additive auto-equivalence $\mathbb{S}: \mathscr{P}(\mc C)\rightarrow  \mathscr{P}(\mc C)$ is a   Serre functor if there is an   isomorphism 
\[  \Hom_{\mathscr{P}(\mc C)}(X, \mathbb{S} Y)\cong \Hom_{\mathscr{P}(\mc C)}(Y, X)^\ast, \] natural in $X$ and $Y$.  Suppose that all injective $\mc C$-modules are of finite projective dimension.   Then, the Serre functor on $\mathscr{P}(\mc C)$ exists and admits a realization via the derived functor $\mc L \mathbf{N}$ of the  Nakayama functor $\mathbf{N}:= \mc C^\ast\otimes_{\mc C}{{}_-}:\mc C\mod \rightarrow\mc C\mod$ by  \cite[Proposisiton~2.2]{MM}.  The following lemma is taken from \cite[Proposition~2.3]{MM}:
\begin{lem} \label{lem::4MM}
	 Assume that all injective $\mc C$-modules are of finite projective dimension. Then $\mc C$ is symmetric if and only if the Serre functor  on $\mathscr{P}(\mc C)$ is isomorphic to the identity. 
\end{lem}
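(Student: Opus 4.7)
The plan is to leverage the explicit realization of the Serre functor on $\mathscr{P}(\mc C)$ as the derived Nakayama functor $\mc L\mathbf{N}$, with $\mathbf{N}=\mc C^\ast\otimes_{\mc C}{{}_-}$, which is the key point stated in the paragraph immediately preceding the lemma. Once this is in hand, the assertion becomes a translation between isomorphisms of bimodules and isomorphisms of (derived) functors on the module category.

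For the forward implication, assume $\mc C$ is symmetric, i.e.\ $\mc C\cong \mc C^\ast$ as $\mc C$-$\mc C$-bimodules. Then $\mathbf{N}=\mc C^\ast\otimes_{\mc C}{{}_-}\cong \mc C\otimes_{\mc C}{{}_-}=\mathrm{id}$ as right-exact endofunctors of $\mc C\text{-mod}$. Since the identity functor is exact, its left derived functor is isomorphic to the identity on the whole of $\mc D^-(\mc C)$, and in particular on the subcategory $\mathscr{P}(\mc C)$ of perfect complexes. This shows that the Serre functor is isomorphic to the identity.

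For the converse, suppose $\mathbb{S}\cong \mathrm{id}$ on $\mathscr{P}(\mc C)$. Any finitely generated projective $\mc C$-module $P$, regarded as a complex concentrated in degree zero, is an object of $\mathscr{P}(\mc C)$; moreover, projectives are $\mathbf{N}$-acyclic, so $\mc L\mathbf{N}(P)\cong \mathbf{N}(P)$. The hypothesis therefore supplies a natural isomorphism $\mathbf{N}(P)\cong P$ on the full subcategory of projective $\mc C$-modules, which is the same data, by Yoneda applied to the representables $P_X=\Hom_{\mc C}(X,{{}_-})$, as an isomorphism of $\mc C$-$\mc C$-bimodules $\mc C^\ast\cong \mc C$. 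Hence $\mc C$ is symmetric. (Equivalently, because $\mathbf{N}$ and $\mathrm{id}$ are both right-exact and preserve coproducts, the natural isomorphism on projectives extends uniquely to a natural isomorphism of functors on all of $\mc C\text{-mod}$, and then an application of Watts' theorem in the strongly locally finite setting yields the desired bimodule isomorphism.)

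The only delicate point is the Watts/Yoneda step in the converse: one must verify that, in the infinite-dimensional but strongly locally finite setting of \cite[Subsection 2.3]{MM}, the correspondence between right-exact cocontinuous endofunctors of $\mc C\text{-mod}$ and $\mc C$-$\mc C$-bimodules still behaves as in the classical case of a finite-dimensional algebra. The local finiteness conditions \eqref{cond1}--\eqref{cond2} on $\mc C$ are precisely what makes this reduction work, since all relevant Hom and tensor computations take place within finite-dimensional blocks.
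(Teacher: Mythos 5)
Your proof is correct. Note, however, that the paper does not prove this lemma itself --- it cites \cite[Proposition~2.3]{MM} verbatim --- so the ``paper's own proof'' lives in that reference. Your argument via the derived Nakayama functor and a Watts/Yoneda reduction is the standard one and is essentially the proof given there: the forward direction is immediate from $\mathbf N = \mc C^\ast\otimes_{\mc C}({}_-)\cong \mc C\otimes_{\mc C}({}_-) = \mathrm{id}$, and the converse uses that the Serre-functor isomorphism, restricted to degree-zero projectives (where $\mc L\mathbf N$ agrees with $\mathbf N$), produces a natural isomorphism $\mathbf N\cong\mathrm{id}$ on $\mc C\text{-proj}$.

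A small point worth spelling out in the converse, which you touch on but could state more crisply: the bimodule $\mc C^\ast$ is recovered from $\mathbf N$ precisely by evaluating $\mathbf N$ on the representable projectives, so the natural isomorphism $\mathbf N|_{\mathrm{proj}}\cong\mathrm{id}|_{\mathrm{proj}}$ \emph{directly} identifies $\mc C^\ast(X,Y)$ with $\mc C(X,Y)$ compatibly with both module structures --- no genuine appeal to a separate Eilenberg--Watts theorem is required, only the coend description of the tensor product. This sidesteps any worry about cocontinuity in the infinite setting, since all the relevant evaluations are finite-dimensional by the strong local finiteness conditions (ii)--(iii). With that rephrasing your argument is complete and entirely parallel to the cited source.
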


\section{Realizations of the Nakayama functor $\mathbf{N}$} \label{sect::3}
In this section, we let $\g$ be an arbitrary quasi-reductive Lie superalgebra with a fixed triangular decomposition $\g=\mf n^-\oplus \mf h \oplus \mf n^+$ and a parabolic subalgebra $\mf p\subseteq \mf g$ containing the Borel subalgebra $\mf b = \mf h\oplus \mf n^+$.  

For a given Harish-Chandra bimodule $B$,  denote by $B^\ast$ and $B^\oast$ the dual bimodule and the sub-bimodule of $B^\ast$ consisting of all elements on which the adjoint action of $\g_\oa$ is locally finite, respectively.
\subsection{Realization via Harish-Chandra $(\g,\g_\oa)$-bimodules} \label{sect::31}
The goal of this subsection is to provide a general realization of the Nakayama functor $\mathbf{N}$ on $\mc O^{\mf p}_\Lambda$, partially following the strategies in \cite[Sectoins 4, 5]{MM}.   Fix  a regular and dominant weight $\la \in X_{\mf p_\oa}$ and denote $\Lambda =\la+\Upsilon$. Recall that we denote by~$I_\lambda^{\mf p_\oa}:= \Ann_{U_0}\Delta^{\mf p_\oa}_\la \subset U_0$ the  annihilator ideal  of $\Delta_\la^{\mf p_\oa}$. Also, we let $\eta\in \h_\oa^\ast$ and $i\in \Z_2$ be determined by $S^{\rm top}(\g_\ob) =\Pi^i \C_\eta$, and so $E_\g = \Pi^i\C_{-\eta}$. Here, $\C_{\pm \eta}$ denotes the one-dimensional $\g$-module of weight $\pm \eta$, respectively.

\begin{thm} \label{thm::realNak}
	The functor 
	\begin{align*}
		&E_\g\otimes \mc L({}_-,\Delta^{\mf p_\oa}_\la)^{\oast}\otimes_{U_0} \Delta^{\mf p_\oa}_\la 
	\end{align*} is isomorphic to the Nakayama functor $\mathbf{N}$ on $\mc O_\Lambda^{\mf p}$. In particular, its left derived functor gives rise to a Serre functor on $\mathscr{P}(\mc O_\Lambda^{\mf p})$.
\end{thm}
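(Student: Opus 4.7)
The plan is to transfer the problem, via the equivalence of Lemma~\ref{lem::CorEqiv2}, to the category $\mc B(I_\la^{\mf p_\oa})$ of Harish-Chandra bimodules, where both the Nakayama functor and the proposed functor admit transparent descriptions, and then match them on projective generators. Since the projective objects of $\mc O^{\mf p}_\Lambda$ are exhausted by direct summands of modules $V\otimes \Ind \Delta_\la^{\mf p_\oa}$ with $V\in \mc F$, and $\mathbf{N}$ is additive and right exact, it will suffice to verify the desired isomorphism on such modules and then propagate it via projective presentations.

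Setting $\Delta:=\Delta_\la^{\mf p_\oa}$ and $I:=I_\la^{\mf p_\oa}$, for $M\in \mc O^{\mf p}_\Lambda$ I would write $B:=\mc L(\Delta, M)\in \mc B(I)$, so that $M\cong B\otimes_{U_0}\Delta$ by Lemma~\ref{lem::CorEqiv2}. Using tensor-Hom adjunction together with the surjectivity of the natural representation map $U_0\to \mc L(\Delta,\Delta)$, one should identify $\mc L(M,\Delta)^\oast$ with a bimodule-theoretic dual of $B$ in which the left and right actions have been swapped. The twist by $E_\g$ enters the picture through equation~\eqref{eqBF}, rewritten as $\Coind\cong \Ind\circ (E_\g\otimes {}_-)$: injective objects of $\mc O^{\mf p}_\Lambda$ (direct summands of $V\otimes \Coind\Delta$) are precisely $E_\g$-twists of projective objects. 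Since $\mathbf{N}$ sends the projective cover of any simple to its injective envelope, this $E_\g$-twist must appear on the Nakayama side in exactly the prescribed form.

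Concretely, for a projective $P = V\otimes \Ind\Delta$, I would compute $\mathbf{N}(P)$ using the presentation of $P$ as a coinduced module via~\eqref{eqBF}, and match it with $E_\g\otimes \mc L(P,\Delta)^\oast\otimes_{U_0}\Delta$ unfolded through the characteristic bimodule $\mc L(\Delta,\Ind\Delta)$. The final claim that the left derived functor of $\mathbf{N}$ is a Serre functor on $\mathscr{P}(\mc O^{\mf p}_\Lambda)$ then follows from the general formalism recalled before Lemma~\ref{lem::4MM}, once the strongly locally finite conditions on $\mc O^{\mf p}_\Lambda$ are verified.

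The main obstacle, I expect, is producing the required identification naturally in the input module, not just on objects. This calls for careful tracking of the contravariance of $\mc L({}_-,\Delta)$ and of $(\cdot)^\oast$, of the swap between left and right $U_0$-actions under dualization, and of the $S^{\rm top}(\g_\ob)$-twist built into~\eqref{eqBF}. A related subtlety is that the composite $E_\g\otimes \mc L({}_-,\Delta)^\oast\otimes_{U_0}\Delta$ is a chain of functors with mixed exactness properties, so one must argue either that the comparison on projective generators is enough (by right exactness of $\mathbf{N}$) or, more delicately, that the natural transformation constructed is an isomorphism throughout $\mc O^{\mf p}_\Lambda$.
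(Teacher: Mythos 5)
Your overall framework is the right one and, at the level of strategy, matches the paper's: pass to $\mc B(I_\la^{\mf p_\oa})$ via Lemma~\ref{lem::CorEqiv2}, identify the Nakayama functor on the projectives $V\otimes\Ind\Delta_\la^{\mf p_\oa}$, and let right exactness carry the identification to all of $\mc O^{\mf p}_\Lambda$. The source of the $E_\g$-twist you point to (the Frobenius isomorphism~\eqref{eqBF}, so that injectives are $E_\g$-twisted projectives) is also where the paper locates it.

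However, there is a genuine gap at the heart of the argument. The sentence that is meant to carry the actual content---``since $\mathbf{N}$ sends the projective cover of any simple to its injective envelope, this $E_\g$-twist must appear on the Nakayama side in exactly the prescribed form''---is a plausibility check, not a derivation: it tells you $\mathbf{N}(P)$ is some injective, but it does not identify $\mathbf{N}(P)$ with $E_\g\otimes\mc L(P,\Delta_\la^{\mf p_\oa})^{\oast}\otimes_{U_0}\Delta_\la^{\mf p_\oa}$. What has to be produced is the natural isomorphism
\[\Hom_{U}(P,N)\;\cong\;\Hom_{U}\bigl(N,\ E_\g\otimes\mc L(P,\Delta_\la^{\mf p_\oa})^{\oast}\otimes_{U_0}\Delta_\la^{\mf p_\oa}\bigr)^\ast,\]
natural in $P$ (projective) and in $N$, and this is exactly what your sketch omits. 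The paper gets there by first proving the key bimodule-level identity
\[\Hom_{U\text{-}U_0}\bigl(\mc L(\Delta,P),\ \mc L(\Delta,N)\bigr)\ \cong\ \mc L(P,\Delta)\otimes_{U\text{-}U_0}\mc L\bigl(\Delta,\ N\otimes S^{\rm top}(\g_\ob)\bigr),\]
for $P$ projective, established first for $P=\Ind\Delta$ by a two-sided computation (using $\mc L(\Delta,\Delta)\cong U_0/I$ from Joseph, the compatibility $\mc L(\Ind X,Y)\cong\Ind^r\mc L(X,Y)\otimes E_\g$, and the adjunction to reduce both sides to $\mc L(\Delta,N_{\pm\eta})^{\g_\oa}$). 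Only after this identity is in place does the chain of double-duals and $\Hom$-$\otimes$ adjunctions produce the stated Nakayama description. Your proposal names the right ingredients (the equivalence, the $({}_-)^\oast$ bimodule duality, the swap of left/right $U_0$-actions, the $S^{\rm top}(\g_\ob)$-twist) but does not assemble them into that chain; without it, the assertion that the two functors agree on projectives remains unproved rather than reduced to routine verification.

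One smaller point: the surjectivity you want to invoke is of the map $U(\g)\to\mc L(\Delta,\Delta)$ (as in the proof of Lemma~\ref{lem::CorEqiv2}), while the isomorphism actually used in the computation is $\mc L(\Delta,\Delta)\cong U_0/I$; your write-up conflates these. Neither is fatal, but both should be made precise, since they feed directly into the adjunction step~\eqref{eq::inlem5} that collapses $\Hom_{U\text{-}U_0}(U/UI,\,{}_-)$ to $\g_\oa$-invariants.
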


Before giving the proof of Theorem \ref{thm::realNak}, we need the following analogue of \cite[Lemma~2.2]{BG}, which is taken from the proof of \cite[Theorem 3.1]{CC}:  
\begin{lem} \label{lem::5}
For any $V\in \mc F$, we have 
\begin{align} \label{eq::inlem5}
&\Hom_{U\text{-}U_0}(V\otimes (U/UI_\la^{\mf p_\oa}), X)\cong \Hom_{U_0}(\Res V, X^{\ad}), \text{ for any $X\in \mc B(I^{\mf p_\oa}_\la)$.}
\end{align}
\end{lem}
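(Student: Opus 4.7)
The plan is to realise the isomorphism as an instance of tensor–Hom adjunction for $U$-$U_0$-bimodules, following the Bernstein–Gelfand strategy adapted in the proof of \cite[Theorem~3.1]{CC}. The natural candidate map is
\[
\Phi\colon\Hom_{U\text{-}U_0}\bigl(V\otimes (U/UI_\la^{\mf p_\oa}),X\bigr)\;\to\; \Hom_{U_0}(\Res V,X^{\ad}),\qquad \Phi(\phi)(v):=\phi(v\otimes\overline{1}),
\]
and the goal is to show that $\Phi$ is a bijection, natural in both variables.

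Well-definedness of $\Phi$ reduces to a single computation. For $x\in \g_\oa$, the bimodule structure on $V\otimes (U/UI_\la^{\mf p_\oa})$ gives $x(v\otimes \overline{1})=xv\otimes\overline{1}+v\otimes\overline{x}$ and $(v\otimes\overline{1})\cdot x=v\otimes\overline{x}$, hence
\[
x\,\phi(v\otimes\overline{1})-\phi(v\otimes\overline{1})\,x
=\phi\bigl(x(v\otimes\overline{1})-(v\otimes\overline{1})\,x\bigr)
=\phi(xv\otimes\overline{1}).
\]
This simultaneously shows that $\Phi(\phi)$ is $\g_\oa$-equivariant with respect to the adjoint action on $X$ and, using that $V\in\mc F$ is finite-dimensional and semisimple, that its image is locally finite under $\ad$, so lands in $X^{\ad}$.

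For bijectivity I would pass through the natural adjunction
\[
\Hom_{U\text{-}U_0}(V\otimes M,X)\;\cong\;\Hom_{U\text{-}U_0}\bigl(M,\Hom_{\C}(V, X)\bigr),
\]
valid for any $U$-$U_0$-bimodules $M,X$, where $\Hom_{\C}(V, X)$ carries the diagonal left $U$-action $(xf)(v)=xf(v)-f(xv)$ and the right $U_0$-action by post-composition. Specialising $M=U/UI_\la^{\mf p_\oa}$, a bimodule map out of $M$ is determined by the image of $\overline{1}$, which must be $\g_\oa$-invariant under the induced adjoint action; the relation $XI_\la^{\mf p_\oa}=0$ is inherited by $\Hom_{\C}(V,X)$, so no extra constraint arises. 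Identifying these invariants with $\Hom_{U_0}(\Res V,X^{\ad})$ (finite-dimensionality of $V$ again forces any equivariant map to land in the locally finite part) and tracing through the composition recovers $\Phi$, completing the proof. The only non-formal ingredient is verifying the displayed bimodule-level adjunction; the main care lies in checking that the diagonal $U$-action on $\Hom_{\C}(V,X)$ commutes with the right $U_0$-structure, which is a direct check using the coproduct $\Delta(x)=x\otimes 1+1\otimes x$ for $x\in\g$.
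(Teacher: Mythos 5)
Your proof is correct and follows precisely the Bernstein--Gelfand tensor--Hom adjunction route that the paper has in mind; the paper gives no independent argument for this lemma but defers to \cite[Lemma~2.2]{BG} and the proof of \cite[Theorem~3.1]{CC}, which proceed in the same way. One inessential remark: you need not invoke finite-dimensionality of $V$ to see that $\Phi(\phi)$ lands in $X^{\ad}$, since $X\in\mc B(I_\la^{\mf p_\oa})$ already guarantees that $X^{\ad}$ is a (possibly infinite) direct sum of objects in $\mc F^{\oa}$ and is therefore locally finite over $\g_{\oa}$.
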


\begin{proof}[Proof of Theorem \ref{thm::realNak}]   Our goal is to establish some key steps, while omitting the parts that are analogous to the case of basic classical and queer Lie
		superalgebras for which  we refer to the proof of \cite[Theorem 4.1, Corollary 4.4]{MM} for details. In the proof we  set $\Delta:=\Delta_\la^{\mf p_\oa}$ and $I:=I_\la^{\mf p_\oa}$. It follows from \cite[6.9 (10)]{Jo82} that  
	\begin{align}
		&\mc L(\Delta,\Delta)\cong U_0/I. \label{eq::LDDUI}
\end{align}First, we shall show that for any $P, N\in \mc O_\Lambda^{\mf p}$, with $P$ projective, there is an isomorphism 
\begin{align}
	&\Hom_{U\text{-}U_0}(\mc L(\Delta, P), \mc L(\Delta, N)) \cong \mc L(P,\Delta) \otimes_{U\text{-}U_0} \mc L(\Delta, N\otimes S^{\rm top}(\g_\ob)), \label{eq::11}
\end{align} natural in both $P$ and $N$.  Here $\otimes_{U\text{-}U_0}$ denotes the tensor product over $U\otimes U_0^{\text{op}}$.  We set $$N_\eta:=  N\otimes S^{\rm top}(\g_\ob),~\text{ and }~N_{-\eta}:=N\otimes E_\g.$$ To establish \eqref{eq::11}, we first consider the case $P=\Ind \Delta$. We introduce the following induction functor $$\Ind^r({}_-):= {}_-\otimes_{U_0}U:~U_0\otimes U_0^{\op}\text{-Mod}\rightarrow  U_0\otimes U^{\op}\text{-Mod}.$$

 We may observe that,  for any $\g_\oa$-modules $X,Y$, we have 
 \begin{align}
 	&\mc L(X, \Ind Y)\cong U\otimes_{U_0}\mc L(X, Y)\text{ ~and~ }\mc L(\Ind X,  Y)\cong \Ind^r\mc L(X, Y)\otimes E_\g, \label{eq::Colem3}
 \end{align} as $U$-$U_0$- and $U_0$-$U$-bimodules, respectively; see, e.g., \cite[Lemma 3.7]{Co}.   It follows that 
 	$$\begin{array}{rcl}
 		\Hom_{U\text{-}U_0}(\mc L(\Delta, \Ind \Delta), \mc L(\Delta, N))& \overset{\text{by \eqref{eq::Colem3}}}{\cong} &  \Hom_{U\text{-}U_0}(U\otimes_{U_0}\mc L(\Delta, \Delta), \mc L(\Delta, N)) \\
 		&\overset{\text{by \eqref{eq::LDDUI}}}{\cong}  & \Hom_{U\text{-}U_0}(U/UI, \mc L(\Delta, N)) \\
 		&\overset{\text{by \eqref{eq::inlem5}}}{\cong}& \Hom_{U_0}(\C, \mc L(\Delta, N)^\ad)\\
 		&\overset{}{\cong}& \mc L(\Delta, N)^{\g_\oa}.
 \end{array}$$
On the other hand, we calculate 
	$$\begin{array}{rcl}
	\mc L(\Ind \Delta, \Delta)\otimes_{U\text{-}U_0}\mc L(\Delta,N)&\overset{\text{by \eqref{eq::Colem3}}}{\cong} & \Ind^r\mc L(\Delta,\Delta)\otimes E_\g\otimes_{U\text{-}U_0} \mc L(\Delta, N) \\
	&\overset{\text{by \eqref{eq::LDDUI}}}{\cong}&	 U/IU \otimes_{U\text{-}U_0} \mc L(\Delta, N_{-\eta})\\
	&\overset{\text{taking the double dual}}{\hookrightarrow}& \Hom_{\C}(U/IU \otimes_{U\text{-}U_0} \mc L(\Delta, N_{-\eta}), \C)^\ast\\
	&\overset{\text{ by adjunction}}{\cong}& \Hom_{U\text{-}U_0}(U/UI, \mc L(\Delta, N_{-\eta})^\oast)^\ast \\
	&\overset{\text{ by \eqref{eq::inlem5}}}{\cong}& \Hom_{U_0}(\C, (\mc L(\Delta, N_{-\eta})^\oast)^\ad)^\ast \\	 
	&\overset{\text{by \cite[Lemma 4.3]{MM}}}{\cong}& \mc L(\Delta, N_{-\eta})^{\g_\oa}.
	 \end{array}$$Since $\mc L(\Delta, N_{-\eta})^{\g_\oa}$ is finite-dimensional, the inclusion $$U/IU \otimes_{U\text{-}U_0} \mc L(\Delta, N_{-\eta}) \subseteq \Hom_{\C}(U/IU \otimes_{U\text{-}U_0} \mc L(\Delta, N_{-\eta}), \C)^\ast$$ is indeed an isomorphism.   By an argument similar to that used in the proof of \cite[Proposition 4.2]{MM}, we obtain natural isomorphisms
\begin{align}
&\mc L(\Ind \Delta, \Delta)\otimes_{U\text{-}U_0}\mc L(\Delta,N)\cong \mc L(\Delta, N_\eta)^{\g_\oa}.
\end{align} Together with the fact that all projective modules in $\mc O^{\mf p}_\Lambda$ consists of direct summands of $\g$-modules of the form $V\otimes \Ind \Delta$, with $V\in \mc F$, we may conclude that the dimensions of the vector spaces on both the left hand and right hand sides of \eqref{eq::11} are the same, for any $P, N\in \mc O^{\mf p}_\Lambda$ with $P$ projective.  The rest of the proof of \eqref{eq::11}  is analogous to that of \cite[Proposition 4.2]{MM}.  Consequently, we have the following calculation similar to that used in the proof of \cite[Theorem 4.1]{MM}:
	$$\begin{array}{rcl}
	\Hom_{U}(P,N)	&\overset{\text{ by Lemma \ref{lem::CorEqiv2}}}{\cong}&\Hom_{U\text{-}U_0}(\mc L(\Delta, P), \mc L(\Delta, N))\\
	&\overset{\text{ by \eqref{eq::11}}}{\cong}& \mc L(P, N)\otimes_{U\text{-}U_0} \mc L(\Delta, N_{\eta})\\
		&\overset{\text{taking the double dual}}{\cong}& \Hom_{\C}(\mc L(P,\Delta)\otimes_{U\text{-}U_0} \mc L(\Delta, N_{\eta}),\C)^\ast \\
			&\overset{\text{ by adjunction}}{\cong}& \Hom_{{U\text{-}U_0}}( \mc L(\Delta, N_{\eta}), \mc L(P,\Delta)^\oast)^\ast  \\
				&\overset{\text{ by Lemma \ref{lem::CorEqiv2} again}}{\cong}& \Hom_{U_0}(N_{\eta}, \mc L(P,\Delta)^\oast\otimes_{U_0} \Delta)^\ast  \\
					&\overset{}{\cong}& \Hom_{U_0}(N, E_\g\otimes \mc L(P,\Delta)^\oast\otimes_{U_0} \Delta)^\ast. 
	\end{array}$$This proves the first claim of Theorem \ref{thm::realNak}.  Since every injective module in  $\mc O^{\mf p}_\Lambda$ has finite projective dimension, it follows by  \cite[Proposition 2.2]{MM} that the left derived functor of  $\mathbf{N}({}_-)\cong E_\g \otimes \mc L({}_-,\Delta)^\oast\otimes_{U_0} \Delta: \mc O^{\mf p}_\Lambda\rightarrow \mc O^{\mf p}_\Lambda$ is a Serre functor on $\mathscr{P}(\mc O^{\mf p}_\Lambda)$. This completes the proof. 
\end{proof}

 The following proposition is a restatement of the first claim in Theorem \ref{main::thm}. 
\begin{prop} \label{prop::10} Let $P\in \mc O^{\mf p}_\Lambda$ be a projective-injective module. Then,  we have  $$\mathbf{N}(P)\cong E_\g \otimes P.$$\end{prop}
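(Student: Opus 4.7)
The plan is to combine Theorem \ref{thm::realNak} with the socle computation from \cite[Theorem 4.4]{CCC} (cited in Subsection \ref{sect::intro::2}). Since Theorem \ref{thm::realNak} realizes $\mathbf{N}$ as the Nakayama functor on $\mc O^{\mf p}_\Lambda$, the general theory of Nakayama functors in the strongly locally finite setup of \cite[Subsection 2.3]{MM} implies that $\mathbf{N}$ sends the indecomposable projective cover $P_L$ of a simple $L$ to the indecomposable injective envelope $I_L$ of $L$. By additivity, it therefore suffices to verify that $I_L \cong E_\g \otimes P_L$ whenever $P_L$ is projective-injective.

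Suppose $P_L$ is projective-injective. Being injective, it is the injective envelope of its own socle, and by \cite[Theorem 4.4]{CCC} we have $\mathrm{soc}(P_L) \cong S^{\rm top}(\g_\ob) \otimes L$. Now $E_\g \otimes ({}_-)$ is an auto-equivalence of $\mc O^{\mf p}_\Lambda$, with quasi-inverse $S^{\rm top}(\g_\ob) \otimes ({}_-)$, so it sends simples to simples and commutes with taking socles, heads, projective covers, and injective envelopes. Applying it to $P_L$ yields a projective-injective module $E_\g \otimes P_L$ whose socle is
$$E_\g \otimes \mathrm{soc}(P_L) \;\cong\; E_\g \otimes S^{\rm top}(\g_\ob) \otimes L \;\cong\; L.$$
Hence $E_\g \otimes P_L$ is the injective envelope of $L$, i.e., $E_\g \otimes P_L \cong I_L \cong \mathbf{N}(P_L)$, as required.

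The main obstacle is the first step: verifying that the identity $\mathbf{N}(P_L) \cong I_L$ really applies in the present infinite-dimensional graded super setting. If one prefers not to rely on the abstract Nakayama formalism, an alternative route is to derive this identity directly from the explicit formula of Theorem \ref{thm::realNak}: via the equivalence of Lemma \ref{lem::CorEqiv2}, this reduces the claim to a computation of the Harish-Chandra bimodule $\mc L(P_L, \Delta_\la^{\mf p_\oa})^\oast \otimes_{U_0} \Delta_\la^{\mf p_\oa}$ and its identification with the injective envelope $I_L$ (before twisting by $E_\g$).
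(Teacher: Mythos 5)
Your argument is correct and takes a genuinely different route from the one in the paper. You exploit two formal facts: (1) the Nakayama functor $\mathbf{N}$ sends the indecomposable projective $P_L$ to the indecomposable injective $I_L$ (a standard property of $\mathbf{N}=\mc C^\ast\otimes_{\mc C}({}_-)$, valid in the strongly locally finite setup), and (2) the socle formula $\mathrm{soc}(P_L)\cong S^{\rm top}(\g_\ob)\otimes L$ from \cite[Theorem 4.4]{CCC}, so that $E_\g\otimes P_L$ is an indecomposable injective with socle $L$ and is therefore $I_L\cong\mathbf{N}(P_L)$. Note that you do not actually need Theorem \ref{thm::realNak} for this: $\mathbf{N}$ is the Nakayama functor by definition, and you never use the explicit Harish--Chandra bimodule formula, only the abstract Nakayama property $\mathbf{N}(P_L)\cong I_L$. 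The paper's proof goes through the partial coapproximation functor $C$: it establishes $\mathbf{N}(P)\cong E_\g\otimes C^2(P)$ for \emph{every} projective $P$ in $\mc O^{\mf p}_\Lambda$ (not just projective-injective ones), by comparing with the $\g_\oa$-picture via $\Res\circ\,C\cong C_0\circ\,\Res$ and \cite[Corollary 4.6]{MM}, and using that injective modules with isomorphic restrictions to $\g_\oa$ are isomorphic; it then specializes to $C^2(P)\cong P$ when $P$ is injective. Your argument is shorter and more elementary, but depends more directly on the socle computation of \cite[Theorem 4.4]{CCC}; the paper's argument proves a stronger statement (the coapproximation formula for all projectives), which is what is actually needed in the proofs of Propositions \ref{prop::twisting} and \ref{prop::completion} and is the tool that drives the rest of Section \ref{sect::3}.
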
\begin{proof}Consider the endo-functor $C: \mc O^{\mf p}_\Lambda\rightarrow \mc O_\Lambda^{\mf p}$ of partial coapproximation with respect to projective-injective modules in $\mc O^{\mf p}_{\Lambda}$. This functor $C$ can be defined as the unique (up to isomorphism) right exact functor that sends a projective module    $P\in \mc O^{\mf p}_\Lambda$ to its submodule generated
		by all possible images in $P$ of all
		projective-injective modules. The functor $C$
		acts on homomorphisms via restriction, see e.g., \cite{KM, MM}. Denote by $\mc O_\Lambda^{\mf p_\oa}$ the full subcategory of $\mc O^\oa$ consisting of all modules $M$ whose support belongs to $\Lambda$
		and the   action of $U(\mf p_\oa)$  on which is locally finite.  If we let $C_0: \mc O^{\mf p_\oa}_\Lambda\rightarrow \mc O^{\mf p_\oa}_\Lambda$ denote the partial co-approximation with respect to projective-injective modules in $\mc O^{\mf p_\oa}_\Lambda$, then there are natural isomorphisms $\Res \circ\, C \cong C_0 \circ\, \Res$ and $\Ind \circ\, C_0 \cong C \circ\, \Ind$ and $C^2$ sends projective modules to injective modules, see e.g., \cite[Subsection 3.3]{Ch}.  If $P\in \mc O_\Lambda^{\mf p}$ is projective, then it follows by \cite[Corollary~4.6]{MM} that $\Res \mathbf{N}(P) \cong E_\g \otimes  C_0^2 (\Res P)\cong E_\g\otimes \Res C^2(P)$.  We may conclude that $\mathbf{N}(P)$ and $E_\g\otimes C^2(P)$ are isomorphic since they are injective modules (see, e.g., \cite[Lemma~3.3(ii), Theorem~4.4]{CCC}). In particular, if $P$ is injective, then we have $\mathbf{N}(P)\cong E_\g\otimes P$. This completes the proof.   
\end{proof}
The following is a direct consequence of  Lemma \ref{lem::4MM} and Proposition \ref{prop::10}. 
\begin{cor} \label{coro::111}
	Suppose that $\g$ is a   quasi-reductive Lie superalgebra with a parabolic subalgebra $\mf p$.  Let  $\Lambda \in \h_\oa^\ast/\Upsilon$.  Then the category $\mc{PI}_\Lambda^{\mf p}$ is not symmetric unless the $\g$-module $S^{\rm top}(\g_\ob)$ is trivial.
\end{cor}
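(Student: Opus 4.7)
The plan is to derive the conclusion by combining Lemma \ref{lem::4MM} with Proposition \ref{prop::10}. First, applying Lemma \ref{lem::4MM} to $\mc C = \mc{PI}_\Lambda^{\mf p}$: the hypothesis on finite projective dimension of injective $\mc C$-modules is trivial here, since every object of $\mc{PI}_\Lambda^{\mf p}$ is already both projective and injective. Thus symmetry of $\mc{PI}_\Lambda^{\mf p}$ is equivalent to the Serre functor on $\mathscr P(\mc{PI}_\Lambda^{\mf p})$ being isomorphic to the identity.

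Next I would identify this Serre functor explicitly on objects. The Serre functor is characterized by the natural Hom-duality isomorphism, and the relevant Hom spaces agree with those in the ambient category $\mc O^{\mf p}_\Lambda$; in particular, the Serre functor on $\mathscr P(\mc{PI}_\Lambda^{\mf p})$ coincides with the restriction of the one on $\mathscr P(\mc O^{\mf p}_\Lambda)$. Since a projective-injective $P$ is projective, the derived Nakayama functor collapses to the plain $\mathbf{N}$ on $P$, and Proposition \ref{prop::10} gives $\mathbf{N}(P) \cong E_\g \otimes P$. Hence the symmetry assumption translates into the requirement $E_\g \otimes P \cong P$ for every projective-injective $P \in \mc O^{\mf p}_\Lambda$.

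The final step is to deduce triviality of $S^{\rm top}(\g_\ob)$ from this pointwise isomorphism. Writing $E_\g = \Pi^i \C_{-\eta}$ as in the paper, the module $E_\g \otimes P$ is indecomposable with highest weight $\mu - \eta$ and parity shifted by $i$, where $\mu$ denotes the highest weight of $P$. Comparing highest weights and parities on both sides forces $\eta = 0$ and $i = 0$, so $E_\g \cong \C$, and then the defining identity $S^{\rm top}(\g_\ob) \otimes E_\g \cong \C$ yields $S^{\rm top}(\g_\ob) \cong \C$, as claimed.

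The most delicate point is the parity comparison in the last paragraph: one must argue that the pointwise isomorphism $E_\g \otimes P \cong P$ genuinely upgrades to triviality of $E_\g$ as a $\g$-supermodule, rather than merely as a character. This reduces to the fact that the endomorphism algebra of an indecomposable projective-injective in $\mc O^{\mf p}$ is local with no odd component, so $P \not\cong \Pi P$ and the parity shift must vanish.
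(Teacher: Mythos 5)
Your overall route matches the paper's: Corollary \ref{coro::111} is indeed claimed there as a direct consequence of Lemma \ref{lem::4MM} and Proposition \ref{prop::10}, and your steps~1--3 flesh this out exactly as intended (apply Lemma \ref{lem::4MM} to $\mathcal{PI}^{\mathfrak p}_\Lambda$, identify the Serre functor with $E_{\mathfrak g}\otimes({}_-)$ on this subcategory via the Nakayama functor, and read off that symmetry forces $E_{\mathfrak g}\otimes P\cong P$ for every projective-injective $P$). The weight part of your step~4 is also fine: since $E_{\mathfrak g}=\Pi^i\mathbb C_{-\eta}$ with $\eta$ a fixed weight, comparing highest weights across the isomorphism $E_{\mathfrak g}\otimes P\cong P$ forces $\eta=0$.

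The parity part of step~4, however, contains a genuine gap. You write that ``the endomorphism algebra of an indecomposable projective-injective in $\mathcal O^{\mathfrak p}$ is local with no odd component, so $P\not\cong\Pi P$.'' This is false in general: for queer-type superalgebras the super-endomorphism algebra of an indecomposable object is often of Clifford (type~Q) form, i.e.\ $\mathbb C[\varepsilon]$ with $\varepsilon$ odd and $\varepsilon^2\in\mathbb C^\times$, and then $\varepsilon$ provides an odd isomorphism $P\to P$, equivalently an even isomorphism $P\xrightarrow{\sim}\Pi P$. Concretely, already for $\mathfrak q(1)$ and a typical block $\lambda\neq0$ the unique indecomposable projective-injective $L_\lambda$ has $\End^{\rm super}(L_\lambda)\cong\mathbb C[x]/(x^2-\lambda)$ with $x$ odd, so $\Pi L_\lambda\cong L_\lambda$. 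Thus the pointwise condition $\Pi^i P\cong P$ alone does not force $i=0$, and a separate argument is required: one either has to exhibit a projective-injective $P$ in the given block for which $\Pi P\not\cong P$ (which is plausible, e.g.\ for the antidominant projective in an integral block, but needs proof), or argue at the level of natural transformations rather than pointwise isomorphisms. As a softening remark, the paper itself only records ``direct consequence'' at this point and does not supply the parity step either, so you have located a real subtlety; but as a proof your claim about endomorphism algebras is the wrong tool and would need to be replaced.
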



\subsection{Realization via twisting functors}  \label{sect::32}
 We continue to assume that $\g$ is an arbitrary  quasi-reductive  Lie superalgebra  with a fixed triangular decomposition $\g =\mf n^-\oplus \mf h\oplus \mf n^+$  and a parabolic subalgebra $\mf p\subseteq \mf g$ containing $\mf b = \mf h\oplus \mf n^+$. In this subsection, we focus on the integral block $\mc O^{\mf p}_{\rm int} = \mc O^{\mf p}_\Upsilon$ of the parabolic category $\mc O^{\mf p}$.

 We recall the construction of twisting functors from~\cite{Ar97},   see also \cite{Mat00,AS, KM, CM, CC}.   Fix a non-zero root vector $x \in \mathfrak{g}_{\bar{0}}^{-\alpha}$ associated with a simple root  $\alpha$  of $\mf n_\oa^+$.    Then we have the Ore localisation $U'_{\alpha}$ of~$U$ with respect to the set of  all non-negative integer powers of~$x$ since the adjoint action of~$x$ on $\mathfrak{g}$ is nilpotent.  Consider the quotient $U_\alpha:=U'_{\alpha}/U$ of the  $U$-$U$-bimodule $U'_\alpha$ by the sub-bimodule $U$.  
 	Let $s\in W$ be the simple reflection associated with $\alpha$,  and let $\varphi_{\alpha}$ be an (even) automorphism of~$\mathfrak{g}$ that maps $\mathfrak{g}^{\beta}$ to  $\mathfrak{g}^{s(\beta)}$ for all simple roots $\beta$.  Set   $^{\varphi_{\alpha}}U_{\alpha}$  to be the bimodule obtained from $U_{\alpha}$ by twisting the left action of~$U$ by~$\varphi_\alpha$. Then the twisting functor on $\mc O$ is   defined as
 	$$
 	T_{s}({}_-)= T_{\alpha}({}_-):=  ^{\varphi_{\alpha}}U_{\alpha}\otimes - : \mathcal{O} \rightarrow \mathcal{O}.
 	$$   
   Let $w_0$ be the longest element in $W$. Since the twisting functors $T_s$ ($s\in W$) satisfy the braid relations (see \cite[Theorem 2]{KM}), we  have the twisting functor $T_{w_0}$ defined via composition with respect to an arbitrary reduced expression for $w_0$.  
   
     Now, let $\ell({}_-): W\rightarrow \mathbb N$ be the length function of $W$ and $w_0^{\mf p}$ be the longest element of the Weyl group of the  Levi subalgebra of $\mf p$. We consider the cohomology functor $\mc L_{\ell(w_0^{\fp})}T_{w_0}$. It follows from \cite[Theorem~8.1]{CM17} that its restriction to $\mc O^{\mf p}_{\rm int}$, which we should denote by $\bf T$, is a right exact functor from  $\mc O^{\mf p}_{\rm int}$ to $\mc O^{\hat{\mf p}}_{\rm int}$, for some parabolic subalgebra $\hat{\mf p}$ of $\g$, see also \cite[Subsection 3.4]{CCC}. Here, if $\mf p = \mf p(H)$, for some $H\in \h_\oa$, then $\hat{\mf p}= \mf p(-w_0 H)$. We use the notation ${\bf T}_0({}_-): \mc O^{\mf p_\oa}_{\rm int}\rightarrow \mc O^{\hat{\mf p}_\oa}_{\rm int}$ to denote the same cohomology functor as defined above for $\g_\oa$.
   By \cite[Equation~(5.1)]{CM}, we have
   \begin{equation} \label{eq::TIIT} 
   	{\bf T}\circ\,\Ind\cong\Ind\circ\,{\bf T}_0\qquad\mbox{and}\qquad \Res\circ\,{\bf T}\cong{\bf T}_0\circ\,\Res.
   \end{equation} 
     Recall that  $C_0: \mc O^{\mf p_\oa}_{\rm int}\rightarrow \mc O^{\mf p_\oa}_{\rm int}$ denotes the partial co-approximation with respect to projective-injective modules in $\mc O^{\mf p_\oa}_{\rm int}$. Then both functors ${\bf T}^2_0$ and $C_0^2$ are isomorphic to the Nakayama functor $\mathbf{N}_0$ on $\mc O_{\rm int}^{\mf p_\oa}$; see   \cite[Proposition 4.1]{MS08a} and \cite[Corollary~5.12]{MM}.  The following is the main result in this subsection: 
 \begin{prop} \label{prop::twisting}  The functor $(E_\g\otimes{}_-)\circ {\bf T}^2$ is isomorphic to the Nakayama functor on $\mc O_{\rm int}^{\mf p}$. Furthermore, the functor $$(E_\g \otimes {}_-)\circ (\mc  L{\bf T})^2: \mathscr{P}(\mc O_{\rm int}^{\mf p})\rightarrow \mathscr{P}(\mc O_{\rm int}^{\mf p})$$ is a Serre functor on $\mathscr{P}(\mc O_{\rm int}^{\mf p})$.
 \end{prop}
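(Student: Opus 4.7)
The plan is to prove the isomorphism $\mathbf{N}\cong (E_\g\otimes{-})\circ {\bf T}^2$ on $\mc O_{\rm int}^{\mf p}$ by comparing both endofunctors on projective modules; the Serre-functor statement will then follow by derivation and an application of \cite[Proposition~2.2]{MM}. First, observe that both functors are right exact: the Nakayama functor is right exact by construction, while ${\bf T}=\mc L_{\ell(w_0^{\fp})}T_{w_0}$ is right exact as a functor $\mc O_{\rm int}^{\mf p}\to \mc O_{\rm int}^{\hat{\mf p}}$ by \cite[Theorem~8.1]{CM17}, and since $w_0^2=e$ we have $\hat{\hat{\mf p}}=\mf p$, so ${\bf T}^2$ is a right exact endofunctor of $\mc O_{\rm int}^{\mf p}$. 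By right exactness it suffices to produce a natural isomorphism $\mathbf{N}(P)\cong E_\g\otimes {\bf T}^2(P)$ on projective objects.

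Fix $P\in \mc O_{\rm int}^{\mf p}$ projective. The argument in the proof of Proposition~\ref{prop::10} already yields $\mathbf{N}(P)\cong E_\g\otimes C^2(P)$, where $C$ is the partial coapproximation with respect to $\mc{PI}^{\mf p}_{\rm int}$. In the classical case one has $C_0^2\cong {\bf T}_0^2\cong \mathbf{N}_0$ on projectives of $\mc O_{\rm int}^{\mf p_\oa}$ by \cite[Proposition~4.1]{MS08a} and \cite[Corollary~5.12]{MM}. Combining these with the intertwining $\Res\circ {\bf T}\cong {\bf T}_0\circ \Res$ from \eqref{eq::TIIT}, we obtain
$$\Res\bigl(C^2(P)\bigr)\cong C_0^2(\Res P)\cong {\bf T}_0^2(\Res P)\cong \Res\bigl({\bf T}^2(P)\bigr).$$
Both $C^2(P)$ and ${\bf T}^2(P)$ are injective objects of $\mc O_{\rm int}^{\mf p}$: the first by construction of $C$, and the second because its restriction ${\bf T}_0^2(\Res P)\cong \mathbf{N}_0(\Res P)$ is injective in $\mc O_{\rm int}^{\mf p_\oa}$. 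Invoking \cite[Lemma~3.3(ii), Theorem~4.4]{CCC} exactly as in the proof of Proposition~\ref{prop::10}, I conclude that $C^2(P)\cong {\bf T}^2(P)$ as $\g$-modules, whence $\mathbf{N}(P)\cong E_\g\otimes {\bf T}^2(P)$, which extends to a natural isomorphism on all of $\mc O_{\rm int}^{\mf p}$ by right exactness.

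For the Serre-functor assertion, I derive both sides of the established identity. Since $E_\g\otimes{-}$ is an exact auto-equivalence (see Subsection~\ref{sect::intro::2}), it commutes with the passage to left derived functors. For the composition one verifies $\mc L({\bf T}^2)\cong (\mc L{\bf T})^2$ via the Grothendieck spectral sequence, together with the fact that $\mc L{\bf T}$ is an equivalence on the bounded derived category, in the spirit of \cite{AS, KM}. Consequently $\mc L\mathbf{N}\cong (E_\g\otimes{-})\circ (\mc L{\bf T})^2$ on $\mathscr{P}(\mc O_{\rm int}^{\mf p})$, and by \cite[Proposition~2.2]{MM} the left-hand side is a Serre functor.

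The principal technical obstacle is twofold: first, establishing that ${\bf T}^2(P)$ is genuinely injective in the super-parabolic setting (rather than merely injective after restriction), for which the characterization of injectives via restriction from \cite{CCC} is essential; and second, verifying the derived compatibility $\mc L({\bf T}^2)\cong (\mc L{\bf T})^2$, which requires a careful analysis of the composition spectral sequence specific to twisting functors, and for which it is important that we are in the integral block where ${\bf T}$ is well-behaved homologically.
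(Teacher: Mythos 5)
Your approach is genuinely different from the paper's, and it has a gap that is worth being precise about. The paper establishes the isomorphism of functors indirectly, by exhibiting a \emph{natural} isomorphism of Hom-spaces
\[
\Hom_{\mc O^{\mf p}}(N,\,{\bf T}^2\Ind Q)\;\cong\;\Hom_{\mc O^{\mf p}}(\Ind Q,\, E_\g\otimes N)^\ast,
\]
for $N\in\mc O^{\mf p}_{\rm int}$ and $Q\in\mc O^{\mf p_\oa}_{\rm int}$ projective, using \eqref{eqBF}, the intertwining relations \eqref{eq::TIIT}, \eqref{eq::SInd}, adjunction, and the classical fact ${\bf T}_0^2\cong\mathbf N_0$. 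Because every step is an instance of adjunction or a known natural isomorphism of functors, naturality comes for free, and the Nakayama/Serre property is then read off from the characterizing Hom-isomorphism. You instead try to identify the two endofunctors object-by-object on projectives and then extend.

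The gap is in the extension step. The isomorphism $C^2(P)\cong{\bf T}^2(P)$ that you extract from the comparison of restrictions and the injectivity criterion of \cite{CCC} is a priori only an isomorphism of individual modules; it is not exhibited as a component of a natural transformation $C^2\Rightarrow{\bf T}^2$ on projectives. The technique ``two right exact functors agreeing on projectives are isomorphic'' requires a \emph{natural} isomorphism on projectives as input; a choice of isomorphism $C^2(P)\cong{\bf T}^2(P)$ for each $P$ separately need not be compatible with morphisms between projectives, and without that compatibility you can neither glue along a presentation $P_1\to P_0\to M\to 0$ nor pass to left derived functors. (This is precisely what the paper's Hom-computation is designed to avoid.) Note that in the proof of Proposition~\ref{prop::10} the paper only asserts a pointwise statement, $\mathbf N(P)\cong E_\g\otimes P$; it does not claim a natural isomorphism of functors there, so that argument does not give you what you need here. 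A secondary, more easily repaired point: to see that ${\bf T}^2P$ is injective in $\mc O^{\mf p}_{\rm int}$ (not just after restriction) one should argue as in the subsequent corollary: $P$ is a summand of $\Ind\Res P$, so by \eqref{eq::TIIT} the module ${\bf T}^2P$ is a summand of $\Ind{\bf T}_0^2\Res P$, which is injective because $\Ind\cong\Coind(S^{\rm top}(\g_\ob)\otimes{}_-)$ preserves injectives; appealing to a purported ``characterization of injectives via restriction'' in \cite{CCC} is not the right justification. Finally, your concern about $\mc L({\bf T}^2)\cong(\mc L{\bf T})^2$ is legitimate but is also present implicitly in the paper; the known homological behaviour of twisting functors on the integral block (e.g.\ from \cite{CM17, CM}) handles it, and it is not the crux of the difficulty.
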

\begin{proof}  
	For any $N\in \mc O_{\rm int}^{\mf p}$ and $Q\in \mc O^{\mf p_\oa}_{\rm int}$ with $Q$ projective, we have the following natural isomorphisms
	  $$\begin{array}{rcl}
  \Hom_{\mc O^{\mf p}}(N, {\bf T}^2 \Ind Q) &\overset{\text{by }\eqref{eqBF}}{\cong} &\Hom_{\mc O^{\mf p}}(N, {\bf T}^2\Coind(S^{\mathrm top}(\g_\ob)\otimes Q)) \\ 
  & \overset{\text{by} \eqref{eq::TIIT}}{\cong} &\Hom_{\mc O^{\mf p}}(N, \Coind ({\bf T}_0^2(S^{\mathrm top}(\g_\ob)\otimes  Q))\\
    & \overset{\text{by adjunction}}{\cong} & \Hom_{\mc O^{\mf p_\oa}}(\Res N, {\bf T}_0^2(S^{\mathrm top}(\g_\ob)\otimes  Q))\\
      & \overset{\text{by } {\bf T}_0^2\cong \mathbf{N}_0}{\cong} &\Hom_{\mc O^{\mf p_\oa}}(S^{\mathrm top}(\g_\ob)\otimes Q, \Res N)^\ast\\
        & \overset{\text{by adjunction and }\eqref{eq::SInd}}{\cong} &\Hom_{\mc O^{\mf p}}( S^{\mathrm top}(\g_\ob)\otimes \Ind Q, N)^\ast \\
          & \overset{}{\cong} &\Hom_{\mc O^{\mf p}}(\Ind Q, E_\g\otimes N)^\ast.
\end{array}$$

This implies that $(E_\g \otimes {}_-)\circ {\bf T}^2$ is isomorphic to the Nakayama functor $\mathbf{N}$ on $\mc O^{\mf p}_{\rm int}$. The conclusion now follows from \cite[Proposition 2.3]{MM}.
\end{proof}
 
 The next result, which proves \cite[Conjecture 5.14]{MM}, is a  consequence of Theorem~\ref{prop::twisting}, but we provide an alternative proof, which is of interest in its own right. 
\begin{cor} Suppose that $\g$ is a basic classical or a Q-type  Lie superalgebra. Then the functor $$\Pi^{\dim \g_\ob}\circ (\mc  L{\bf T})^2: \mathscr{P}(\mc O_{\rm int}^{\mf p})\rightarrow \mathscr{P}(\mc O_{\rm int}^{\mf p})$$ is a Serre functor on $\mathscr{P}(\mc O_{\rm int}^{\mf p})$. In particular, $\Pi^{\dim \g_\ob}$ is a Serre functor on $\mathscr{P}(\mc{PI}^{\g})$.
\end{cor}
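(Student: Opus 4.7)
The statement has two parts. The first is an immediate substitution into Proposition~\ref{prop::twisting}, while the ``in particular'' assertion admits the cleaner alternative proof via Proposition~\ref{prop::10}. I would outline both steps.

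\textbf{Step 1 (first assertion).} The discussion in Subsection~\ref{sect::21}, together with the relation $\Coind({}_-) \cong \Pi^{\dim \g_\ob} \circ \Ind({}_-)$ which is valid for basic classical and Q-type $\g$, combined with the Frobenius-type isomorphism \eqref{eqBF}, forces $S^{\rm top}(\g_\ob) \cong \Pi^{\dim \g_\ob}\C$ as $\g$-modules; hence $E_\g \otimes ({}_-) \cong \Pi^{\dim \g_\ob}({}_-)$ as endofunctors of $U$-Mod. Substituting this isomorphism into the Serre-functor formula $(E_\g \otimes {}_-) \circ (\mc L {\bf T})^2$ of Proposition~\ref{prop::twisting} yields the first claim.

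\textbf{Step 2 (alternative proof of the ``in particular'' assertion).} The plan is to bypass twisting functors entirely and instead invoke Proposition~\ref{prop::10} in the case $\mf p = \g$: for every projective-injective $P \in \mc O^\g$ one has $\mathbf{N}(P) \cong E_\g \otimes P \cong \Pi^{\dim \g_\ob} P$. Since every object of $\mc{PI}^\g$ is simultaneously projective and injective, the Nakayama functor restricts to an exact autoequivalence of $\mc{PI}^\g$ and no derivation is required. The general Serre-functor machinery of Subsection~2.4 (in particular \cite[Proposition 2.2]{MM}) then identifies this restriction with the Serre functor on $\mathscr{P}(\mc{PI}^\g)$.

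\textbf{Main obstacle.} The only non-routine point I foresee is verifying that the intrinsic Nakayama functor on $\mc{PI}^\g$, computed from the endomorphism algebra of a basic additive generator of this subcategory, coincides with the restriction of the Nakayama functor on the ambient category $\mc O^\g$ supplied by Theorem~\ref{thm::realNak} and Proposition~\ref{prop::10}. This should be essentially formal: since the objects of $\mc{PI}^\g$ are both projective and injective in $\mc O^\g$, the $\Hom$-pairings and the associated dualities entering the two definitions of $\mathbf{N}$ agree on these objects, so the restricted functor equals $E_\g \otimes ({}_-) \cong \Pi^{\dim \g_\ob}$, thereby completing the alternative proof.
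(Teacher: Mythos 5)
Your proposal is correct but takes a genuinely different route from the paper. The remark preceding the corollary already concedes that the statement ``is a consequence of'' Proposition~\ref{prop::twisting} --- which is exactly your Step~1 --- but the proof the paper actually gives is deliberately an \emph{alternative} one, ``of interest in its own right'': it sets $F:={\bf T}^2$, verifies directly that (i) $\mc LF$ is an auto-equivalence of $\mathscr P(\mc O^{\mf p}_{\rm int})$, (ii) $F$ sends projectives to injectives, and (iii) $(E_\g\otimes{}_-)\circ F$ agrees with $\mathbf N$ on $\mc{PI}^{\mf p}_{\rm int}$, and then runs the argument of \cite[Theorem~3.4]{MS08a} mutatis mutandis. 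That last step is precisely what disposes of your ``main obstacle,'' i.e.\ the comparison of the intrinsic Serre functor on $\mc{PI}^{\mf p}$ with the restriction of the one on $\mc O^{\mf p}$; this comparison is \emph{not} purely formal for a general parabolic $\mf p$, and you should not wave it through without the \cite{MS08a} argument. For the special case $\mf p=\g$ that your Step~2 actually needs, the claim does hold, but you should spell out the fact that makes it work: $\mc O^\g_{\rm int}=\mc F$ is a Frobenius category, so $\mc{PI}^\g$ is the full subcategory of \emph{all} projectives in $\mc F$, hence $\mathscr P(\mc{PI}^\g)$ literally equals $\mathscr P(\mc O^\g_{\rm int})$, and the Nakayama functor is then exact, giving $\mc L\mathbf N\cong\mathbf N\cong E_\g\otimes({}_-)\cong\Pi^{\dim\g_\ob}$ on projectives. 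With that made explicit, your Step~2 recovers the unlabeled remark closing Subsection~\ref{sect::33}. The trade-off: the paper's (i)--(iii) proof is informative and stands independently of the Harish-Chandra-bimodule chain Theorem~\ref{thm::realNak} $\to$ Proposition~\ref{prop::twisting}, whereas your route is shorter but leans directly on that chain.
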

\begin{proof}
Set $F:={\bf T}^2:\mc O^{\mf p}_{\rm int}\rightarrow\mc O^{\mf p}_{\rm int}$. We claim that $F$ has the following properties:
\begin{itemize}
	\item[(i)] $\mc LF: \mathscr P(\mc O^{\mf p}_{\rm int})\rightarrow \mathscr  P(\mc O^{\mf p}_{\rm int})$ is an auto-equivalence.
	\item[(ii)] $F$ maps projective modules   to injective modules. 
	\item[(iii)] The restrictions of $(E_\g\otimes {}_-)\circ\,F$ and $\mathbf N$ to $\mc{PI}_{\rm int}^{\mf p}$ are isomorphic.
\end{itemize}
 Assertion (i) follows by \cite[Proposition 5.11]{CM}.  To prove (ii), let $P\in \mc O^{\mf p}_{\rm int}$ be a projective module. Then $P$ is a direct summand of $\Ind \Res P$.   It follows from \eqref{eq::TIIT} that $${\bf T}\Ind \Res P\cong \Ind {\bf T}_0 \Res P.$$ Therefore, we  conclude that $FP$ is an injective module in $\mc O^{\mf p}_{\rm int}$. Finally, by \cite[Propsotion 5.9]{Co} or an argument analogous to the one in  \cite[Corollaries 4.6, 5.12]{MM}, the functor $F$ is isomorphic to the functor $C$ of   partial coapproximation with respect to projective-injective modules in $\mc O^{\mf p}_{\rm int}$. Therefore, Assertion (iii) follows.  Using (i), (ii) and (iii), the rest of the proof follows the proof of \cite[Theorem 3.4]{MS08a} mutatis mutandis. 
\end{proof}

\subsection{Realization via Joseph's Enright completion functors} \label{sect::33}
	We keep the same notation as in the previous section. 
	The aim of this subsection is to give a realization of the Nakayama functor on $\mc O_{\rm int}^{\mf p}$ in term of Joseph's version of Enright completion functors from~\cite{Jo82}, see also \cite{KM, Co, CC}.  	First, we introduce a duality functor ${\bf D}$ on the category $\mc O$ from \cite[Subsection 1.3]{CCC} as follows.  For any $\g$-module $M$,   let $M^\ast$  denote the canonical dual module $M^\ast = M_\oa^\ast\oplus M_\ob^\ast$ endowed with the action given by $x(f)(n)=-(-1)^{|x||f|}f(xn)$, for homogeneous elements  $f\in M^\ast$,  $x\in\g$ and  $m\in M$. Here $|x|$ and $|f|$ stand for the  parities of   $x$ and $f$, respectively.
  Since each inner  automorphism of $\g_\oa$ extends to an automorphism of $\g$ (see e.g., \cite[Subsection 3.1]{Mu12}), the action of $w_0$ defines an automorphism $\phi$ of $\g$.  We twist the dual module $M^\ast$ by the automorphism $\phi$ and denote the new module by $M^\ast_{\phi}$. We define ${\bf D} M$  to be the    maximal submodule of $ M^\ast_{\phi}$ on which $\h_\oa$ acts semisimply and locally finitely. This defines a duality functor on $\mc O$, which restricts to an exact contravariant involutive equivalence  ${\bf D}: \mc O^{\mf p}\rightarrow \mc O^{\hat{\mf p}}$, see \cite[Proposition~3.4]{CCC}.
  
 Let  ${\bf D}_0: \mc O^{\mf p_\oa}\rightarrow \mc O^{\hat{\mf p}_\oa}$ denote the   functor ${\bf D}$ constructed above for $\g_\oa$-modules. 
It follows from \cite[Proposition 2.11]{Ge98} that 
\begin{align} \label{eq::DICD}
	&{\bf D}\circ\,\Ind \cong \Coind\circ\,{\bf D}_0, 
\end{align} 
		  Fix    a dominant,  regular and integral weight $\lambda \in X_{\mf p_\oa}$ and set $\Lambda=\lambda+\Upsilon$.  For any simple reflection $s\in W$, we recall the following completion functor on $\mc O_\Lambda$ from \cite[Subsection 4.2]{CC}, which   is an analogue of Joseph’s version of Enright completion functor  from \cite[Section 2]{Jo82}:
	$$G_s({}_-) := \mc L(\Delta_{s \cdot \la}, {}_-)\otimes_{U_0} \Delta_\la:\; \mc O_{\rm int} \rightarrow \mc O_{\rm int}.$$

	Let $\text{ID}_{\rm int}$ be the identity functor on $\mc O_{\rm int}$. For any $M\in \mc O_{\rm int}$, the embedding $\Delta_{s\cdot \la}\hookrightarrow \Delta_\la$ gives a natural homomorphism $\mc L(\Delta_{\la},M)\rightarrow \mc L(\Delta_{s\cdot \la},M)$.   
	This homomorphism gives rise to a natural  transformation ${\text{ID}_{\rm int}} \rightarrow G_s$, which is an isomorphism when restricted to $\mc{PI}_{\rm int}$, see   \cite[Subsection 2.4]{Jo82} and  \cite[Subsection 2.3]{KM}.

 Similarly, we have the completion functor $G_{w_0}$ defined via composition with respect to a reduced expression for $w_0$.   
We consider the cohomology functor $\mc R_{\ell(w_0^{\fp})}G_{w_0}$. Then its restriction to $\mc O^{\mf p}_{\rm int}$, which we shall denote by ${\bf G}$, is a left exact functor from  $\mc O^{\mf p}_{\rm int}$ to  $\mc O^{\hat{\mf p}}_{\rm int}$. We shall use the notation ${\bf G}_0({}_-): \mc O^{\mf p_\oa}_{\rm int}\rightarrow \mc O^{\hat{\mf p}_\oa}_{\rm int}$ to denote the same cohomology functor  for $\g_\oa$. It follows from \eqref{eq::Colem3} that
\begin{align}\label{eq::GI}
&{\bf G}\circ\, \Ind \cong \Ind \circ\, {\bf G}_0,~  \Res \circ\,{\bf G} \cong {\bf G}_0 \circ\, \Res
\end{align}
In the case when $\g = \g_\oa$ is  a (reductive) Lie algebra, it  was proved in \cite[Theorem~3]{AS} that $G_s$ is right adjoint to $T_s$. Furthermore, each of them is a conjugation of the other by 	the natural duality on $\mc O_{\rm int}$ by \cite[Theorem 4.1]{KM}. In particular,  ${\bf D}_0 {\bf G}_0^2 {\bf D}_0$  is isomorphic to the Nakayama functor ${\mathbf N}_0$ on $\mc O_{\rm int}^{\mf p_\oa}$.  These remain valid for basic classical and P-type Lie superalgebras, see  \cite[Thereom 5.5]{Co} and \cite[Theorem 4.5]{CC}. The following is a generalization to arbitrary quasi-reductive Lie superalgebras.

  \begin{prop} \label{prop::completion}  
  	We have an isomorphism of endofunctors on $\mc O_{\rm int}^{\mf p}$: $$\DGD\cong {\bf T}^2.$$ 
  	In particular, there is a natural transformation ${\bf T}^2\rightarrow {\emph{ID}_{\rm int}}$, which  is an isomorphism when restricted to $\mc{PI}_{\rm int}^{\mf p}$.  
 \end{prop}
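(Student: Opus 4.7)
The plan is to reduce $\DGD\cong \mathbf{T}^2$ to the simple-reflection identity $\mathbf{D}\,G_s\,\mathbf{D}\cong T_s$ on $\mc O_{\rm int}$ for each simple $s$, assemble via a reduced expression for $w_0$ to obtain $\mathbf{D}\,\mathbf G\,\mathbf{D}\cong \mathbf T$ between the parabolic categories, and then square using $\mathbf{D}^2\cong \mathrm{ID}$ to conclude $\mathbf{D}\,\mathbf{G}^2\,\mathbf{D}\cong (\mathbf{D}\,\mathbf G\,\mathbf{D})^2\cong \mathbf T^2$.

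The core of the argument is the identity $\mathbf{D}\,G_s\,\mathbf{D}\cong T_s$ for each simple reflection $s$. For semisimple Lie algebras this is \cite[Theorem 4.1]{KM}; for basic classical superalgebras, \cite[Theorem 5.5]{Co}; for $\pn$-type, \cite[Theorem 4.5]{CC}. For an arbitrary quasi-reductive $\g$, I would reduce to the even case by induction from $\g_\oa$, using that both sides are right exact on $\mc O_{\rm int}$ and that every projective in $\mc O_{\rm int}$ is a direct summand of $\Ind Q$ for some projective $Q\in\mc O_{\rm int}^\oa$. Combining the commutations $T_s\circ\Ind\cong\Ind\circ T_{s,0}$, $G_s\circ\Ind\cong\Ind\circ G_{s,0}$, and $\mathbf{D}\circ\Ind\cong\Coind\circ\mathbf{D}_0$ from \eqref{eq::TIIT}, \eqref{eq::GI} and \eqref{eq::DICD}, together with $\Coind(-)\cong\Ind(E_\g\otimes -)$ from \eqref{eqBF}, the identity on $\Ind Q$ reduces to $\mathbf{D}_0 G_{s,0}\mathbf{D}_0\cong T_{s,0}$ up to a twist by $E_\g$ that has to be tracked carefully on both sides. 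Right exactness then propagates the identity from projectives to all of $\mc O_{\rm int}$. Composing along a reduced expression $s_{i_1}\cdots s_{i_N}=w_0$, and inserting copies of $\mathbf{D}^2\cong\mathrm{ID}$ between adjacent factors, yields $\mathbf{D}\,G_{w_0}\,\mathbf{D}\cong T_{w_0}$. Since $\mathbf{D}$ is exact and contravariant, it interchanges left and right derived functors, so passage to the $\ell(w_0^{\mf p})$-th cohomology translates the identity into $\mathbf{D}\,\mathbf G\,\mathbf{D}\cong \mathbf T$ between $\mc O_{\rm int}^{\mf p}$ and $\mc O_{\rm int}^{\hat{\mf p}}$, and squaring delivers $\DGD\cong\mathbf T^2$.

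For the \emph{in particular} assertion, the natural transformation $\mathrm{ID}_{\rm int}\to G_s$ induced by $\Delta_{s\cdot\la}\hookrightarrow \Delta_\la$ composes to $\mathrm{ID}_{\rm int}\to G_{w_0}$ and passes to cohomology as $\mathrm{ID}_{\rm int}\to\mathbf G$, which is an isomorphism when restricted to $\mc{PI}_{\rm int}^{\mf p}$. Conjugating by $\mathbf D$ reverses the direction to $\mathbf T\to\mathrm{ID}_{\rm int}$, and whiskering with itself gives $\mathbf T^2\to\mathrm{ID}_{\rm int}$, still an isomorphism on $\mc{PI}_{\rm int}^{\mf p}$ because $\mathbf T$ preserves this subcategory. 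The main obstacle I expect is the quasi-reductive extension of $\mathbf{D}\,G_s\,\mathbf{D}\cong T_s$: the proofs in \cite{KM,Co,CC} rely on features particular to their respective superalgebras, and the inductive argument from $\g_\oa$ demands careful bookkeeping to verify that the $E_\g$-twists arising from \eqref{eqBF} cancel cleanly on both sides of the would-be isomorphism.
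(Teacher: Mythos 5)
Your approach is genuinely different from the paper's, and it has a real gap at its core. The paper never proves the single-reflection identity $\mathbf{D}\,G_s\,\mathbf{D}\cong T_s$ for general quasi-reductive $\g$ — indeed, the discussion preceding Proposition~\ref{prop::completion} explicitly lists this as known only for reductive Lie algebras, basic classical superalgebras, and type~P, citing \cite{KM,Co,CC}, and then pointedly states only the \emph{squared} identity $\DGD\cong\mathbf{T}^2$ as ``a generalization to arbitrary quasi-reductive Lie superalgebras.'' This is not an accident. The paper's proof sidesteps the single-step identity entirely: it computes $\Hom_{\mc O^{\mf p}}(N,\DGD\,\Ind Q)$ for $Q$ projective in $\mc O^{\mf p_\oa}_{\rm int}$, using \eqref{eq::DICD}, \eqref{eq::GI}, \eqref{eqBF}, adjunction, and the Levi-level fact $\mathbf{D}_0\mathbf{G}_0^2\mathbf{D}_0\cong\mathbf{N}_0$, to show that $(E_\g\otimes{}_-)\circ\DGD$ is the Nakayama functor. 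Since Proposition~\ref{prop::twisting} already identified $(E_\g\otimes{}_-)\circ\mathbf{T}^2$ with the Nakayama functor, the equality $\DGD\cong\mathbf{T}^2$ drops out by comparing the two, with no need for the finer statement $\mathbf{D}\,\mathbf{G}\,\mathbf{D}\cong\mathbf{T}$.

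Your plan requires establishing $\mathbf{D}\,G_s\,\mathbf{D}\cong T_s$ for every quasi-reductive $\g$, and the reduction you sketch — agree on $\Ind Q$ for projective $Q$ and propagate by right exactness — leaves the substantive steps unproved. You need $G_{s,0}(E_\g\otimes{}_-)\cong E_\g\otimes G_{s,0}({}_-)$ and $\mathbf{D}_0(E_\g\otimes{}_-)\cong S^{\rm top}(\g_\ob)\otimes\mathbf{D}_0({}_-)$ (the latter rests on $W$-invariance of the weight of $S^{\rm top}(\g_\ob)$); both are plausible but not in the cited sources, and for type~P the $E_\g$-twist is genuinely non-trivial as a $\g_\oa$-module so it is not a parity formality. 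You also need the isomorphisms assembled on $\Ind Q$ to constitute a \emph{natural} transformation of right-exact functors — agreement object-by-object on a generating family is not automatically natural, and one must argue compatibility with morphisms and with the passage to direct summands. You yourself flag these concerns honestly. Your approach, if completed, would prove the stronger assertion $\mathbf{D}\,\mathbf{G}\,\mathbf{D}\cong\mathbf{T}$, which the paper does not; but as written it is a gap, not a proof. The ``in particular'' paragraph (conjugating $\mathrm{ID}_{\rm int}\to\mathbf{G}$ by $\mathbf{D}$ and squaring) is essentially the paper's intended argument and is fine once $\DGD\cong\mathbf{T}^2$ is available.
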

  \begin{proof}   
  	For   $N\in \mc O_{\rm int}^{\mf p}$ and $Q\in \mc O^{\mf p_\oa}_{\rm int}$ with $Q$ projective, we have   natural isomorphisms 
  $$\begin{array}{rcl}
  	\Hom_{\mc O^{\mf p}}(N,  \DGD \Ind Q) & \overset{\text{by }\eqref{eq::DICD},~\eqref{eq::GI}}{\cong} & \Hom_{\mc O^{\mf p}}(N, \Ind({\bf D}_0{\bf G}_0^2{\bf D}_0 Q)) \\
  		&\overset{\text{by }\eqref{eqBF}}{\cong} &  \Hom_{\mc O^{\mf p}}(N, \Coind(S^{\rm top}(\g_\ob)\otimes {\bf D}_0{\bf G}_0^2{\bf D}_0 Q)) \\
  		&\overset{\text{by adjunction}}{\cong}&\Hom_{\mc O^{\mf p_\oa}}(\Res N\otimes E_\g, {\bf D}_0{\bf G}_0^2{\bf D}_0  Q) \\ 
  			& \overset{\text{by ${\bf D}_0 {\bf G}_0^2 {\bf D}_0\cong {\bf N}_0$}}{\cong} & \Hom_{\mc O^{\mf p}}(  Q, \Res  N\otimes E_\g)^\ast\\ 
  			& \overset{\text{by adjucntion}}{\cong} &  \Hom_{\mc O^{\mf p}}(  \Ind Q, E_\g\otimes N)^\ast.  
  	\end{array}$$ 
  	
   This implies that $(E_\g\otimes {}_-)\circ \DGD$ is isomorphic to the Nakayama functor on $\mc O^{\mf p}_{\rm int}$.  The conclusion now follows from Proposition \ref{prop::twisting}.
  \end{proof}

\begin{proof}[Proof of Theorem \ref{main::thm}]
	The first assertion of Theorem \ref{main::thm} is proved in Proposition \ref{prop::10}. Next, the equivalence between Parts (a) and (b) follows from Lemma \ref{lem::4MM}. Now, we prove the equivalence between Parts (b) and (c). We note that the Nakayama functor~$\mathbf N$ preserves $\mathscr P(\mc{PI}_{\rm int}^{\mf p})$ and thus gives rise to a Serre functor on this category by \cite[Proposition 2.2]{MM}. From Propositions \ref{prop::twisting} and \ref{prop::completion}, we conclude that $\mathbf N$ induces the  functor $E_{\g}\otimes ({}_-)$ when restricted to $\mc{PI}_{\rm int}^{\mf p}$. This completes the proof. 
\end{proof}

 \begin{rem} We may note that $\mc O^\g =\mc O_{\rm int}^{\g}$ coincides with the category $\mc F$ of all finite-dimensional $\h_\oa$-semisimple $\g$-modules. Since all projective modules in $\mc F$ are injective, it follows that, as a special case of Theorem \ref{main::thm}, the Nakayama functor on $\mc F$ is always isomorphic to $E_\g\otimes ({}_-)$ when restricted to $\mc{PI}^{\g}$.
	\end{rem}

 \section{Examples: strange Lie superalgebras} \label{sect::pnqn}
 For given positive integers $m$ and $n$, recall that the general linear Lie superalgebra $\mathfrak{gl}(m|n)$ admits a realization as  $(m+n) \times (m+n)$ complex matrices
 \begin{align} \label{gllrealization}
 	\left( \begin{array}{cc} A & B\\
 		C & D\\
 	\end{array} \right),
 \end{align}
 where $A,B,C$ and $D$ are $m\times m, m\times n, n\times m, n\times n$ matrices,
 respectively. The Lie bracket of $\gl(m|n)$ is given by the super commutator. 	We define $E_{ij}$, for $1\leq i,j \leq m+n$, to be the elementary matrix in $\mathfrak{gl}(m|n)$ with  $(i,j)$-entry equal to $1$ and all other entries equal to $0$. 
 
  The goal of this section is to complete the proofs of Theorem~\ref{thmq::2} and Theorem~\ref{thmp::3}. We refer to  \cite[Section 1 and Section 2]{ChWa12}  	for further details about   the strange Lie superalgebras.

  	\subsection{Lie superalgebras of type P} 
 \label{subsect::pen} In this subsection, we set up the usual description of P-type Lie superalgebras and complete the proof of  Theorem \ref{thmp::3}.  Fix a positive integer $n$. 
 Then the {\em periplectic Lie superalgebra}~$\pn$  is a subalgebra of   $\mf{gl}(n|n)$ with the    following  matrix \mbox{realization} 
 \[ \mf{pe}(n)=
 \left\{ \left( \begin{array}{cc} A & B\\
 	C & -A^t\\
 \end{array} \right)\| ~ A,B,C\in \C^{n\times n},~\text{$B=B^t$ and $C=-C^t$} \right\}.
 \]  We may note that the even subalgebra $\g_\oa$ of $\g:=\pn$ is isomorphic to  $\gl(n)$. We also refer to \cite[Section 5]{CCC} for more details about the classification of parabolic decompositions  of $\pn$. Define the   Cartan subalgebra  $\mf h: = \bigoplus_{1\leq i \leq n}\C e_{ii}$, where  $e_{ii}: = E_{ii} -E_{n+i,n+i}$, for $1\leq i\leq n$. Let $\{\vare_i|~i=1, \ldots, n\}\subset \mathfrak{h}^*$ be the dual basis in $\h^\ast$. Then    $$E_{\pn}\otimes({}_-)\cong \Pi^{n}\C_{-2\omega_n}\otimes({}_-),$$
 where $\omega_n =\vare_1+\vare_2+\cdots+\vare_n$. The following corollary is a direct consequence of Corollary \ref{coro::111}. 
 
  \begin{cor} \label{cor::pn1}  Let $\mf p$ be a parabolic subalgebra of $\pn$ and $\Lambda\in \h^\ast/\Upsilon$.  Then the category $\mc{PI}_\Lambda^{\mf p}$ is not symmetric. 
 \end{cor}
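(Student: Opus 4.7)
The plan is essentially a one-step application of Corollary~\ref{coro::111}. That corollary already establishes that, for any quasi-reductive $\g$ and any parabolic $\mf p$, the category $\mc{PI}_\Lambda^{\mf p}$ can be symmetric only if the $\g$-module $S^{\mathrm{top}}(\g_\ob)$ is trivial. Consequently, the entire content of this corollary reduces to exhibiting a one-line obstruction showing that this triviality fails for $\g=\pn$.

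For the obstruction I would use the identification $E_{\pn}\otimes({}_-)\cong \Pi^{n}\C_{-2\omega_n}\otimes({}_-)$ displayed immediately before the corollary. Combined with the defining property $S^{\mathrm{top}}(\g_\ob)\otimes E_\g\cong\C$ (the trivial even $\g$-module), this forces $S^{\mathrm{top}}(\g_\ob)$ to be a one-dimensional $\g$-module whose $\h$-weight equals $2\omega_n=2(\vare_1+\cdots+\vare_n)$. Since this weight is visibly nonzero in $\h^\ast$, the module $S^{\mathrm{top}}(\g_\ob)$ cannot coincide with the trivial module, and Corollary~\ref{coro::111} immediately delivers the failure of symmetry for every coset $\Lambda\in\h^\ast/\Upsilon$ and every parabolic $\mf p$ of $\pn$.

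There is no genuine obstacle in this argument; the only thing worth double-checking is the bookkeeping of the weight. For an independent sanity check one can compute the $\h$-weight of $S^{\mathrm{top}}(\g_\ob)$ directly as the sum of the weights of the odd root spaces of $\pn$: the symmetric block $B$ contributes the weights $\vare_i+\vare_j$ for $1\le i\le j\le n$, with total $(n+1)\omega_n$, while the antisymmetric block $C$ contributes $-\vare_i-\vare_j$ for $1\le i<j\le n$, with total $-(n-1)\omega_n$. The two sums combine to $2\omega_n$, which confirms the value of $E_{\pn}$ quoted in the text and yields the same obstruction.
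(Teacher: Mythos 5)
Your argument is exactly the paper's: the authors display $E_{\pn}\otimes({}_-)\cong \Pi^{n}\C_{-2\omega_n}\otimes({}_-)$ and then state the corollary as ``a direct consequence of Corollary~\ref{coro::111},'' which is precisely the nonvanishing-of-$2\omega_n$ observation you make. Your weight bookkeeping (both the sum $(n+1)\omega_n$ from the symmetric block and $-(n-1)\omega_n$ from the antisymmetric block) is correct and gives a nice independent verification of the quoted value of $E_{\pn}$.
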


Now we consider the  derived subalgebra  $\pn': = [\pn, \pn]$ of $\pn$, which is a  simple Lie superalgebra in the case when $n\geq 3$; see, e.g., \cite{Ka1}.  We may note that $\pn'_\oa \cong \mf{sl}(n)$ and $\pn'_\ob = \pn_\ob$. Therefore,  we have  $$E_{\pn'}\otimes({}_-)\cong \Pi^n({}_-).$$

The claims of Theorem \ref{thmp::3}  follow from the following theorem:
\begin{thm}  \label{thm::pn2} Let $\mf p$ be a parabolic subalgebra of $\pn'$  and  $\Lambda \in \h^\ast/\Upsilon$.
	\begin{itemize}
		\item[(a)] If $n$ is odd, then the category $\mc{PI}^{\mf p}_{\Lambda}$ is not symmetric.
		\item[(b)] The Serre functor on $\mathscr{P}(\mc{PI}^{\mf p}_{\rm int})$ is isomorphic to the functor $\Pi^n$. Furthermore, $\mc{PI}^{\mf p}_{\rm int}$ is symmetric if and only if $n$ is even.
	\end{itemize}  
\end{thm}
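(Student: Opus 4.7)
The plan is to deduce Theorem \ref{thm::pn2} as a direct application of Theorem \ref{main::thm} and Corollary \ref{coro::111} to the case $\g = \pn'$, combined with the computation $E_{\pn'}\otimes({}_-)\cong\Pi^n({}_-)$ recorded just above the theorem statement. Since $E_{\pn'}$ is the one-dimensional tensor inverse of $S^{\rm top}((\pn')_\ob)$, this yields $S^{\rm top}((\pn')_\ob)\cong\Pi^n\C$ as $\pn'$-modules, and this single computation is what drives every assertion in the theorem.

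For part (a), I apply Corollary \ref{coro::111} directly. When $n$ is odd, $\Pi^n\C$ is a one-dimensional module whose $\Z_2$-grading differs from that of the trivial $\g$-module; since morphisms of supermodules preserve parity, the two are not isomorphic as $\g$-modules. Hence $S^{\rm top}((\pn')_\ob)$ is not the trivial module, and Corollary \ref{coro::111} gives that $\mc{PI}_\Lambda^{\mf p}$ is not symmetric for any coset $\Lambda\in\h_\oa^\ast/\Upsilon$.

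For part (b), the identification of the Serre functor follows from Theorem \ref{main::thm}, whose proof shows that the Nakayama functor $\mathbf N$ preserves $\mathscr P(\mc{PI}_{\rm int}^{\mf p})$ and induces on this subcategory the autoequivalence $E_\g\otimes({}_-)$, which in our setting is $\Pi^n$; by \cite[Proposition 2.2]{MM} this realizes the Serre functor on $\mathscr P(\mc{PI}_{\rm int}^{\mf p})$, which proves (i). The equivalence in (ii) is then a direct instance of the equivalence (a)$\Leftrightarrow$(c) of Theorem \ref{main::thm}: $\mc{PI}_{\rm int}^{\mf p}$ is symmetric iff $E_{\pn'}\cong \Pi^n\C$ is the trivial module iff $n$ is even; alternatively, the ``only if'' direction is immediate from part (a) applied to $\Lambda = \Upsilon$.

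In view of the structural results already established, this argument involves no real obstacle; it is essentially a bookkeeping exercise that substitutes the formula $E_{\pn'}\otimes({}_-)\cong\Pi^n({}_-)$ into Theorem \ref{main::thm} and Corollary \ref{coro::111}. The one point worth spelling out carefully, and the only step that requires any genuine computation, is that the character $2\omega_n$ appearing in $E_{\pn}\otimes({}_-)\cong\Pi^n\C_{-2\omega_n}\otimes({}_-)$ collapses to the trivial character upon restriction to $\pn'_\oa\cong\mathfrak{sl}(n)$, so that only the parity shift $\Pi^n$ survives on the derived subalgebra $\pn'$. This justifies the identification $E_{\pn'}\otimes({}_-)\cong\Pi^n({}_-)$ on which the entire proof rests.
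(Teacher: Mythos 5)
Your proof is correct and follows essentially the same route as the paper: part (a) via Corollary~\ref{coro::111}, and part (b) by substituting $E_{\pn'}\otimes({}_-)\cong\Pi^n$ into the conclusion (and proof) of Theorem~\ref{main::thm}, with the symmetric/non-symmetric dichotomy then read off from Lemma~\ref{lem::4MM} (equivalently, from the (a)$\Leftrightarrow$(c) equivalence in Theorem~\ref{main::thm}). Your closing remark explaining why the character $2\omega_n$ dies upon restriction to $\pn'_{\oa}\cong\mathfrak{sl}(n)$ is a useful clarification that the paper leaves implicit.
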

 \begin{proof}   Claim (a)  follows from Corollary \ref{coro::111}. The first assertion in Claim (b) follows from the proof of Theorem \ref{main::thm} mutatis mutandis, and this, together with Lemma \ref{lem::4MM}, imply the second assertion in Claim (b). 
 \end{proof}

  \subsection{Lie superalgebras of type Q} \label{subsect::qn}

  In this subsection, we fix a positive integer $n$. The {\em queer} Lie superalgebra $\mf q(n)$ can be realized as the following subalgebra of $\gl(n|n)$:
 	\[\mf g:=
 	\mf q(n)=
 	\left\{ \left( \begin{array}{cc} A & B\\
 		B & A\\
 	\end{array} \right) \| ~A,B\in \C^{n\times n} \right\}.
 	\]   
 	 We note that the even subalgebra of $\g$ is isomorphic to  $\gl(n)$.  Define the odd trace form 
	$$\text{otr}({}_-): \mf q(n)\rightarrow\C,\hskip0.2cm\left( \begin{array}{cc} A & B\\
		B & A\\
	\end{array} \right)\mapsto \text{tr}(B),$$ where $\text{tr}$ denotes  the usual trace of a matrix. Let $\text{Id}\in \mf q(n)$ be the identity matrix and define the subalgebra $\mf{sq}(n):=\{x\in \mf q(n)|~\text{otr}(x) =0\}$. Then the Q-type Lie superalgebras are given as follows:
	\begin{align}
	&\g =\g_n = \mf q(n),~\mf{sq}(n),~\mf{pq}(n): = \mf q(n)/(\text{Id}),\text{ and } \mf{psq}(n): = \mf{sq}(n)/(\text{Id}). \label{Qtype}
	\end{align} 

Fix a parabolic subalgebra $\mf p$ of $\mf g$. For each $\Lambda \in \h^\ast_\oa/\Upsilon$, recall that  we denote by $\mathbf{N}$  the Nakayama functor  on $\mc O^{\mf p}_\Lambda$.  The following lemma extends \cite[Corollary 5.12]{MM} to all Q-type Lie superalgebras $\g$ without any assumption on the dimension of $\g_\ob$.
   \begin{lem} \label{lem::ParCoApp}  Suppose that $\mf g_n$ is a Q-type Lie superalgebra from \eqref{Qtype}. Then we have 
   	  \begin{align*}
   		&\mathbf{N} \cong   	    \begin{cases} \Pi^n \circ C^2, &\mbox{ for~$  \g_n =\mf q(n)$ or $\mf{pq}(n)$;}\\  	    \Pi^{n-1} \circ C^2,&\mbox{ for $  \g_n = \mf{sq}(n)$ or $\mf{psq}(n)$}. \end{cases}
   	\end{align*}
   \end{lem}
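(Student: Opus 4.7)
My plan is to combine two ingredients: the functorial identity $\mathbf{N} \cong (E_\g \otimes -) \circ C^2$ as endofunctors of $\mc O^{\mf p}_\Lambda$, and an explicit case-by-case computation of the one-dimensional $\g$-module $E_\g$ for each of the four Q-type Lie superalgebras listed in~\eqref{Qtype}. This will extend \cite[Corollary~5.12]{MM}, which handles the case of even $\dim \g_\ob$ for $\mf q(n)$. For the first ingredient, I would note that the proof of Proposition~\ref{prop::10} already establishes the isomorphism $\mathbf{N}(P) \cong E_\g \otimes C^2(P)$ for a projective $P \in \mc O^{\mf p}_\Lambda$ (via restriction to $\g_\oa$, the super-to-reductive comparison in \cite[Corollary~4.6]{MM}, and the fact that both sides are injective $\g$-modules). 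Since $\mathbf N$ and $E_\g \otimes C^2(-)$ are both right exact on $\mc O^{\mf p}_\Lambda$ and every object admits a projective presentation, this object-wise agreement promotes to a functorial isomorphism on the entire category.

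Next, I would determine $E_\g$, or equivalently $S^{\rm top}(\g_\ob)$, for each Q-type $\g_n$. Recall that, as a super vector space, $S^{\rm top}(V)$ for a purely odd space $V$ of dimension $k$ coincides with the ordinary top exterior power $\Lambda^k(V)$ placed in parity $k \pmod 2$. In all four cases the even subalgebra is a quotient of $\gl(n)$, and the adjoint action identifies $\g_\ob$ either with the adjoint $\gl(n)$-module (for $\mf q(n)$ and $\mf{pq}(n)$, of dimension $n^2$) or with the adjoint $\mf{sl}(n)$-module (for $\mf{sq}(n)$ and $\mf{psq}(n)$, of dimension $n^2 - 1$). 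In either case the character of the top exterior power equals the determinant of the adjoint representation, which is trivial because $\operatorname{tr}(\operatorname{ad} x) = 0$ identically on a connected group. Combining the trivial weight with the parity shift $\dim \g_\ob \pmod 2$ and using $n^2 \equiv n$ and $n^2 - 1 \equiv n - 1 \pmod 2$ should yield
\[
E_\g \otimes (-) \;\cong\; \begin{cases} \Pi^n, & \g_n = \mf q(n)\text{ or }\mf{pq}(n), \\ \Pi^{n-1}, & \g_n = \mf{sq}(n)\text{ or }\mf{psq}(n). \end{cases}
\]
The quotient cases $\mf{pq}(n)$ and $\mf{psq}(n)$ can be handled in parallel with $\mf q(n)$ and $\mf{sq}(n)$ respectively, since the central element $\text{Id}$ acts trivially on $\g_\ob$ by commutativity and hence does not alter the $\g_\oa$-module structure of the odd part.

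Substituting into $\mathbf N \cong (E_\g \otimes -) \circ C^2$ will then deliver the stated dichotomy. The main technical point I expect to need care with is the promotion of the object-wise isomorphism in Proposition~\ref{prop::10} to a genuine natural isomorphism of functors on $\mc O^{\mf p}_\Lambda$; this follows from right exactness together with agreement on projective generators, but is the only step that requires argument beyond direct invocation of existing results.
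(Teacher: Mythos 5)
Your overall strategy matches the paper's: compute $E_\g$ explicitly for each Q-type algebra and combine with the identification $\mathbf{N}\cong (E_\g\otimes{}_-)\circ C^2$. Your computation of $E_\g$ is correct (and indeed the paper already records $E_\g\otimes({}_-)\cong\Pi^{\dim\g_\ob}$ for Q-type algebras in Subsection~\ref{sect::21}, so the determinant/weight-cancellation argument is supportive but redundant). The arithmetic $\dim\g_\ob = n^2$ versus $n^2-1$ and the reductions modulo $2$ are all fine.

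The genuine gap is in the promotion step you flag at the end. You assert that ``right exactness together with agreement on projective generators'' suffices to upgrade the object-wise isomorphism $\mathbf{N}(P)\cong E_\g\otimes C^2(P)$ (for projective $P$) to an isomorphism of functors. That is not enough: the standard argument for comparing two right exact functors requires a \emph{natural} isomorphism between their restrictions to the subcategory of projectives, not merely an unstructured family of object-wise isomorphisms. The proof of Proposition~\ref{prop::10} produces isomorphisms $\mathbf{N}(P)\cong E_\g\otimes C^2(P)$ by restricting to $\g_\oa$ and invoking \cite[Lemma~3.3(ii), Theorem~4.4]{CCC}; nothing in that argument makes the resulting isomorphisms compatible with morphisms $P\to P'$. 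The paper supplies the missing naturality by a different route: by Theorem~\ref{thm::realNak}, $\mathbf{N}$ has a bimodule realization and therefore naturally commutes with projective functors, and the desired functorial comparison with $C^2$ then follows by the argument of \cite[Theorem~5.1(a), Corollaries~4.6, 5.12]{MM}. You would need to either invoke that commutation with projective functors (the essential structural input) or otherwise construct the isomorphisms $\mathbf{N}(P)\cong E_\g\otimes C^2(P)$ in a manifestly natural way; ``right exactness plus agreement on projectives'' alone does not close the gap.
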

\begin{proof} 
We may observe that the functor $E_\g\otimes({}_-): \mc O^{\mf p}_\Lambda\rightarrow \mc O^{\mf p}_\Lambda$ is isomorphic to $\Pi^n$ for $\g_n=\mf q(n),~\mf{pq}(n)$, and to $\Pi^{n-1}$ for $\g_n=\mf{sq}(n),~\mf{psq}(n)$. 
 In addition,	it follows from Theorem \ref{thm::realNak} that the functor    $\mathbf{N}$ naturally commutes with projective functors. The conclusion of the lemma follows from \cite[Theorem 5.1-(a)]{MM} mutatis mutandis the proof of \cite[Corollaries~4.6, 5.12]{MM}.
\end{proof}

	 The following theorem recovers the same results in \cite{MM} under the assumption that $\dim \g_\ob$ is even and implies the claim of Theorem~\ref{thmq::2} in Subsection \ref{sect::intro}.
	\begin{thm}
     Let $\g_n$ be a Q-type Lie superalgebra from \eqref{Qtype}. Then let $\mf p$ be a parabolic subalgebra
     	of $\g_n$ and let  $\Lambda\in \h^\ast_\oa/\Upsilon$.
     Then we have 
     \begin{itemize}
     	\item[(a)] If $\g_n =\mf q(n)$ or $\mf{pq}(n)$,  then  	$\mc{PI}_\Lambda^{\mf p}$ is symmetric if and only if $n$ is even.  Furthermore, the functor $\Pi^n$ is a Serre functor on $\mathscr{P}(\mc{PI}^\g)$. 
     	\item[(b)] If $\g_n = \mf{sq}(n)$ or $\mf{psq}(n)$, then  	$\mc{PI}_\Lambda^{\mf p}$ is symmetric if and only if $n$ is odd.  Furthermore, the functor $\Pi^{n-1}$ is a Serre functor on $\mathscr{P}(\mc{PI}^\g)$. 
     \end{itemize} 
	\end{thm}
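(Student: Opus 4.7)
The plan is to combine Lemma \ref{lem::ParCoApp} with Lemma \ref{lem::4MM} and the observation that the functor $C$ of partial coapproximation with respect to projective-injective modules restricts to the identity on $\mc{PI}_\Lambda^{\mf p}$. These ingredients reduce everything to a parity count for $\Pi^n$, respectively $\Pi^{n-1}$.

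By Lemma \ref{lem::ParCoApp}, the Nakayama functor $\mathbf{N}$ on $\mc O_\Lambda^{\mf p}$ is isomorphic to $\Pi^n \circ C^2$ in case (a) and to $\Pi^{n-1} \circ C^2$ in case (b). If $P\in \mc{PI}_\Lambda^{\mf p}$, then the identity map $P\to P$ exhibits $P$ as the image of a projective-injective module, so $C(P)=P$, and consequently $C^2$ restricts to the identity on $\mc{PI}_\Lambda^{\mf p}$. Hence the restriction of $\mathbf{N}$ to $\mc{PI}_\Lambda^{\mf p}$ is isomorphic to $\Pi^n$ in case (a) and to $\Pi^{n-1}$ in case (b). Applying Lemma \ref{lem::4MM}, the category $\mc{PI}_\Lambda^{\mf p}$ is symmetric if and only if this restricted Nakayama functor is isomorphic to the identity, which occurs exactly when $n$ is even in case (a), and exactly when $n$ is odd in case (b).

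For the Serre functor statement in the ``Furthermore'' parts, specialize to $\mf p=\g$ so that $\mc O^{\g}=\mc F$ by the remark at the end of Section \ref{sect::3}. Every object of $\mc{PI}^{\g}$ is projective and injective, so perfect complexes over $\mc{PI}^{\g}$ are simply bounded complexes. The restriction of $\mathbf{N}$ to $\mc{PI}^\g$ coincides with the exact functor $\Pi^n$ in case (a) (respectively $\Pi^{n-1}$ in case (b)), which therefore equals its own left derived functor. By \cite[Proposition 2.2]{MM}, this derived Nakayama functor provides a Serre functor on $\mathscr{P}(\mc{PI}^\g)$.

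Since all substantive technical work has already been carried out in Lemma \ref{lem::ParCoApp}, the remaining manipulation is formal and I do not anticipate any genuine obstacle. The only point requiring brief care is the identification of the Nakayama functor of the subcategory $\mc{PI}_\Lambda^{\mf p}$ with the restriction of $\mathbf{N}$ from $\mc O_\Lambda^{\mf p}$; this is a standard consequence of the defining adjunction $\Hom(X,\mathbf{N}Y)\cong \Hom(Y,X)^{\ast}$ together with the fact that $\mathbf{N}$ preserves $\mc{PI}_\Lambda^{\mf p}$.
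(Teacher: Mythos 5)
Your proposal is correct and follows the same route as the paper, which deduces both statements directly from Lemma~\ref{lem::ParCoApp} together with Lemma~\ref{lem::4MM}; you have merely made explicit the intermediate facts (that $C$ is the identity on projective-injectives, and that $\mathbf N$ restricts to a Serre functor on $\mathscr P(\mc{PI})$) which the paper leaves implicit and which already appear in the proof of Theorem~\ref{main::thm} and the surrounding text.
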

	\begin{proof} The conclusion follows from Lemmata \ref{lem::4MM} and \ref{lem::ParCoApp}. 
	\end{proof}



\end{document}